\newcommand{\xycenter}[1]{
	\begin{center}
	\mbox{\xymatrix{#1}}
	\end{center}
	}
\theoremstyle{plain}
\newtheorem{theorem}{Theorem}[section]
\newtheorem{proposition}[theorem]{Proposition}
\newtheorem{lemma}[theorem]{Lemma}
\newtheorem{corollary}[theorem]{Corollary}
\theoremstyle{definition}
\newtheorem{definition}[theorem]{Definition}
\theoremstyle{remark}
\newtheorem{remark}[theorem]{Remark}
\newcommand{\sheaf}[1]{\mathscr{#1}}
\newcommand{\OO}{\sheaf{O}}
\newcommand{\MM}{\sheaf{M}}
\newcommand{\C}{\mathbb C}
\renewcommand{\P}{\mathbb P}
\DeclareMathOperator{\codim}{codim}
\newcommand{\Pf}{\mathrm{Pf}}
\newcommand{\wMM}{\widetilde{\MM}}
\newcommand{\sk}{\mathcal{S}}
\newcommand{\skss}{\sk^{ss}}
\newcommand{\skps}{\sk^{ps}}
\newcommand{\wskss}{\widetilde{\sk}^{ss}}
\newcommand{\GL}{\mathrm{GL}}
\newcommand{\GLsix}{\GL_6 (\C )}
\newcommand{\cubicsexplicit}{\P\bigl(H^0\bigl(\OO_{\P^4}(3)\bigr)\bigr)}
\newcommand{\cubics}{\mathcal{C}}
\newcommand{\ring}{R}
\begin{document}

\title[Moduli spaces of $6\times 6$ skew matrices of linear forms]{Moduli spaces of $6\times 6$ skew matrices of linear forms on $\P^4$ with a view towards intermediate Jacobians of cubic threefolds}

\author[B\"ohning]{Christian B\"ohning}\thanks{The first author was supported by the EPSRC New Horizons Grant EP/V047299/1.}
\address{Christian B\"ohning, Mathematics Institute, University of Warwick\\
Coventry CV4 7AL, England}
\email{C.Boehning@warwick.ac.uk}

\author[von Bothmer]{Hans-Christian Graf von Bothmer}
\address{Hans-Christian Graf von Bothmer, Fachbereich Mathematik der Universit\"at Hamburg\\
Bundesstra\ss e 55\\
20146 Hamburg, Germany}
\email{hans.christian.v.bothmer@uni-hamburg.de}

\author[Buhr]{Lukas Buhr}
\address{Lukas Buhr, Institut f\"ur Mathematik\\
Johannes Gutenberg-Universit\"at Mainz\\
Staudingerweg 9\\
55128 Mainz, Germany}
\email{lubuhr@uni-mainz.de}

\date{\today}


\begin{abstract}
It is well known that every smooth cubic threefold is the zero locus of the Pfaffian of a $6\times6$ skew-symmetric matrix of linear forms in $\P^4$. To compactify the space of such Pfaffian representations of a given cubic and to study the construction in families as well as for singular or reducible cubics, it is thus natural to consider the incidence correspondence of Pfaffian representations inside the product of the space of semistable skew-symmetric $6\times 6$ matrices of linear forms in $\P^4$ and the space of cubics. Here we describe concretely the irreducible component of this incidence correspondence dominating the space of skew matrices. 
\end{abstract}

\maketitle

\section{Introduction and background}\label{sIntroduction}

Pfaffian representations of cubic threefolds and associated moduli spaces have been studied by various authors, notably Beauville \cite{Beau00}, \cite{Beau02}, Comaschi \cite{Co20}, \cite{Co21}, Manivel-Mezzetti \cite{MaMe05}, Iliev-Markushevich \cite{IM00}. In this article we seek to prove some general results that are useful to study certain compactifications of such spaces of Pfaffian representations in families. To describe our goal more precisely, we first need to set up some notation and terminology.

\medskip 

Let $\ring$ be the graded polynomial ring over $\C$ in variables $x_0, \dots, x_4$ of weight $1$, and let $\sk = \P \bigl ( (\C^5)^{\vee} \otimes \Lambda^2 \C^6 \bigr)$ be the projective space of skew-symmetric $6\times 6$-matrices with entries linear forms on $\P^4$. The group $G=\GLsix$ acts on $\sk$ by acting trivially on $(\C^5)^{\vee}$ and naturally on $\Lambda^2 \C^6$. This corresponds to performing simultaneous row and column operations on skew-symmetric $6\times 6$-matrices. We denote by
\[
\MM := \skss //G
\]
the good quotient of the locus of semistable points in $\sk$ by the action of $G$. Let
\[
\pi \colon \skss \to \MM
\]
be the canonical projection. We define the subset $\skps \subset \skss$ to be the subset of those points whose orbits in $\skss$ are closed. We recall that every fibre of $\pi$ contains a unique closed orbit, i.e. the orbit of an element in $\skps$. Let $\cubics =\cubicsexplicit$ be the space of cubic hypersurfaces in $\P^4$. 
Consider the incidence correspondence 
\[
\mathcal{T}=\bigl\{  ([M], [F]) \mid \Pf (M) \in (F)\bigr\} \subset \skss \times \cubics 
\]
with its two projections 
\[
\pi_1 \colon \mathcal{T} \to  \skss, \quad \pi_2 \colon \mathcal{T} \to \cubics .
\]
The condition $\Pf (M) \in (F)$ is to be understood as the vanishing of all $2\times 2$ minors of the matrix containing in its rows the coefficients of $\Pf(M)$ and $F$ with respect to some basis of the degree $3$ homogeneous polynomials in $x_0, \dots , x_4$. 

\medskip

Let $\skss_0 \subset \skss$ be the subset consisting of matrices with Pfaffian zero, and let 
\[
\mathcal{T}_0= \pi_1^{-1}(\skss_0) =\skss_0\times \cubics .
\]
Notice that $\pi_1$ is one-to-one onto its image outside of the subset $\mathcal{T}_0 \subset \mathcal{T}$. 
Denote by $\wskss$ the closure of $\pi_1^{-1} (\skss - \skss_0)$ in $\mathcal{T}$. 

The group $G$ acts on $\sk \times \cubics$ if we let it act trivially on $\cubics$. Then $\skss \times \cubics$ is the locus of semistable points for this action, and $\wskss$ is clearly a $G$-invariant  irreducible closed subset of $\skss \times \cubics$. The good categorical quotient $(\skss \times \cubics)//G$ is nothing but $\MM \times \cubics$, and $\wskss$ maps to an irreducible closed subset of $\MM \times \cubics$, which we denote by $\wMM$. 

\medskip

Our goal is to describe the points of $\wskss_0:=\wskss \cap \mathcal{T}_0$ more explicitly, which we will achieve in Theorem \ref{tClosure}. Our initial interest in this problem stems from the fact that $\wMM \to \cubics$ may then be viewed as the \emph{universal family of $G$-equivalence classes of Pfaffian representations of cubic threefolds}, the fibres are projective, and there is a fibre over any cubic, singular, reducible, whatsoever. Also base-changing to curves inside $\cubics$ and discarding components of the resulting total spaces that do not dominate the base curves, we can talk about one-parameter degenerations of (compactified) spaces of $G$-equivalence classes of Pfaffian representations. In a forthcoming article, we will show that the fibre of $\wMM$ over a smooth cubic threefold $X$ coincides with the Maruyama-Druel-Beauville moduli space of equivalence classes of semistable sheaves on $X$ with Chern classes $c_1=0, c_2=2, c_3=0$, or equivalently, the intermediate Jacobian of $X$ blown up in the Fano surface of lines \cite{Beau02}. Thus $\wMM$ gives us a way to construct explicit degenerations of these birational models of the intermediate Jacobians, which, we hope, will pave a way to decide unresolved questions about the cycle theory on such intermediate Jacobians: the most famous perhaps being if, realising these intermediate Jacobians as Prym varieties of double covers of smooth plane quintic curves, half of the class of the Prym curve is an algebraic cohomology class for very general $X$. By \cite{Voi17} a positive answer to this is equivalent to the cubic being Chow zero universally trivial, which in turn is necessary for the cubic to be stably rational. It is a famous open problem if very general cubic threefolds are stably rational or not. 

\medskip

We will make repeated use of the following classification result below. 

\begin{table}
\begin{tabular}{|c|c|c|c| c|}
\hline
& $M$ & $S$ & $Y$ & \\
\hline
 (a)
 &
  $\left(
 \begin{smallmatrix}
       &{l}_{3}&&0&{l}_{0}&{l}_{1}\\
      {-{l}_{3}}&&&{-{l}_{0}}&0&{l}_{2}\\
      &&&{-{l}_{1}}&{-{l}_{2}}&0\\
      0&{l}_{0}&{l}_{1}&&{l}_{4}&\\
      {-{l}_{0}}&0&{l}_{2}&{-{l}_{4}}&&\\
      {-{l}_{1}}&{-{l}_{2}}&0&&&
\end{smallmatrix}
\right)$
&
$\left(
 \begin{smallmatrix}
      {l}_{2}&\\
      {-{l}_{1}}&\\
      {l}_{0}&{l}_{4}\\
      &{l}_{2}\\
      &{-{l}_{1}}\\
      {l}_{3}&{l}_{0}
\end{smallmatrix}
\right)$
&
a smooth conic
&
stable
\\ \hline
 (b)
 &
$\left(
\begin{smallmatrix}
      0&{l}_{0}&{l}_{1}\\
      {-{l}_{0}}&0&{l}_{2}\\
      {-{l}_{1}}&{-{l}_{2}}&0\\
      &&&0&{l}_{2}&{l}_{3}\\
      &&&{-{l}_{2}}&0&{l}_{4}\\
      &&&{-{l}_{3}}&{-{l}_{4}}&0
\end{smallmatrix}
\right)$
&
$\left(
\begin{smallmatrix}
      {l}_{2}&\\
      {-{l}_{1}}&\\
      {l}_{0}&\\
      &{l}_{4}\\
      &{-{l}_{3}}\\
      &{l}_{2}
\end{smallmatrix}
\right)$
 &
 two skew lines
 &
 stable
\\ \hline
 (c)
 &
$\left(
\begin{smallmatrix}
      0&{l}_{0}&{l}_{1}\\
      {-{l}_{0}}&0&{l}_{2}\\
      {-{l}_{1}}&{-{l}_{2}}& 0\\
      &&&0&{l}_{1}&{l}_{2}\\
      &&&{-{l}_{1}}&0&{l}_{3}\\
      &&&{-{l}_{2}}&{-{l}_{3}}&0
\end{smallmatrix}
\right)$
&
$\left(
\begin{smallmatrix}
      {l}_{2}&\\
      {-{l}_{1}}&\\
      {l}_{0}&\\
      &{l}_{3}\\
      &{-{l}_{2}}\\
      &{l}_{1}
\end{smallmatrix}
\right)$
&
 \begin{tabular}{c}
two distinct \\ intersecting lines\\
with an embedded point\\
at the intersection \\
spanning the ambient $\P^4$
\end{tabular}
&
stable
\\ \hline
 (d)
 &
$\left(
\begin{smallmatrix}
&  &  & 0 &l _0 &l_1 \\
 &  &  & -l_0 & 0 & l_2  \\
 &  &  & -l_1 & -l_2 & 0 \\
0 &l _0 &l_1 & & l_3 & l_4\\
-l_0 & 0 & l_2  & -l_3 &  & \\
-l_1 & -l_2 & 0 & -l_4 &  & 
\end{smallmatrix}
\right)$
&
$\left(
\begin{smallmatrix}
      {l}_{2}&\\
      {-{l}_{1}}& -l_4\\
      {l}_{0}&{l}_{3}\\
      &{l}_{2}\\
      &{-{l}_{1}}\\
      &{l}_{0}
\end{smallmatrix}
\right)$
 &
  \begin{tabular}{c}
  a double line lying on \\
a smooth quadric surface\\
\end{tabular}
&
\begin{tabular}{c}
strictly semistable,\\ but not polystable
\end{tabular}
\\ \hline
(e)
 &
$\left(
\begin{smallmatrix}
&  &  & 0 &l _0 &l_1 \\
 &  &  & -l_0 & 0 & l_2  \\
 &  &  & -l_1 & -l_2 & 0 \\
0 &l _0 &l_1 & & l_3 & \\
-l_0 & 0 & l_2  & -l_3 &  & \\
-l_1 & -l_2 & 0 &  &  & 
\end{smallmatrix}
\right)$
&
$\left(
\begin{smallmatrix}
      {l}_{2}&\\
      {-{l}_{1}}& \\
      {l}_{0}&{l}_{3}\\
      &{l}_{2}\\
      &{-{l}_{1}}\\
      &{l}_{0}
\end{smallmatrix}
\right)$
&
  \begin{tabular}{c}
a plane double line \\
with an embedded point,\\
spanning the ambient $\P^4$\\
\end{tabular}
&
\begin{tabular}{c}
strictly semistable,\\ but not polystable
\end{tabular}
\\ \hline
(f) 
&
$\left(
\begin{smallmatrix}
&  &  & 0 &l _0 &l_1 \\
 &  &  & -l_0 & 0 & l_2  \\
 &  &  & -l_1 & -l_2 & 0 \\
0 &l _0 &l_1 & &  & \\
-l_0 & 0 & l_2  &  &  & \\
-l_1 & -l_2 & 0 &  &  & 
\end{smallmatrix}
\right)$
&
$\left(
\begin{smallmatrix}
      {l}_{2}&\\
      {-{l}_{1}}& \\
      {l}_{0}&\\
      &{l}_{2}\\
      &{-{l}_{1}}\\
      &{l}_{0}
\end{smallmatrix}
\right)$
&
\begin{tabular}{c}
 a line \\
together with its \\
full first order\\
infinitesimal \\
neighbourhood
\end{tabular}
&
polystable
\\ \hline
\end{tabular}
\medskip

\caption{Semi-stable matrices $M$ with vanishing Pfaffian}
\label{tPfaffZero}
\end{table}

\begin{theorem}\label{tGeometryM0}
Let $[M]\in \skss$ have vanishing Pfaffian. View $M$ as a map of graded $\ring$-modules
\[
\ring (-1)^{6} \xrightarrow{M} \ring^6.
\]
Let $S$ be a matrix with columns representing a minimal system of generators of the kernel of this map $M$.  Let $Y$ be the rank at most two locus of $M$ with its scheme structure defined by the $4\times 4$ sub-Pfaffians. Then there exists independent linear forms $l_0,\dots,l_4$ and matrices $B \in \mathrm{GL_6}(\C)$ and $B'  \in \mathrm{GL_2}(\C)$ such that after making the replacements 
\begin{align*}
	M & \mapsto B^t M B \\
	S & \mapsto B^{-1} S B'
\end{align*}
we have one of the cases in Table \ref{tPfaffZero}. Moreover, the stability type of $M$ is as described in the last column of Table \ref{tPfaffZero}.
\end{theorem}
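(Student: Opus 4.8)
The plan is to let the geometry of the rank-at-most-two locus $Y$ drive the classification. First I would read the hypothesis $\Pf(M)=0$ as the statement that the net of skew forms $p\mapsto M(p)$, viewed as a linear map $\C^5\to\Lambda^2\C^6$, has image a linear subspace of $\P(\Lambda^2\C^6)$ contained in the Pfaffian cubic, i.e.\ every $M(p)$ has $\rk\le 4$. The first reduction is to show the generic rank is exactly $4$: if $\rk M(p)\le 2$ for all $p$, then the image of $M$ lies in $\mathrm{Gr}(2,6)\subset\P(\Lambda^2\C^6)$, and since every linear subspace of $\mathrm{Gr}(2,6)$ of dimension $\ge 3$ consists of the $2$-planes through a fixed vector, the whole net preserves a distinguished subspace of $\C^6$, so a Hilbert--Mumford $1$-parameter subgroup destabilises $[M]$, contradicting semistability. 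Hence $\rk M=4$ generically, the kernel and cokernel sheaves have rank $2$, and $Y$ is the locus where the rank drops to $2$.

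Next I would pin down $Y$ as a \emph{conic}. Since $M$ is a section of $\Lambda^2\C^6\otimes\OO_{\P^4}(1)$, the class of the skew-symmetric degeneracy locus $Y$ is given by the Harris--Tu Schur--Pfaffian formula purely in terms of $h=c_1(\OO(1))$, and the outcome is $[Y]=2h^3\in H^6(\P^4,\Z)$; in particular $Y$ is nonempty of the expected codimension $3$. One must still exclude an excess component of $\dim\ge 2$, which would again produce an invariant flag and hence a destabilising $1$-parameter subgroup, so semistability forces $Y$ to be a purely one-dimensional scheme of degree $2$, i.e.\ a possibly degenerate, possibly non-reduced conic. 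The six rows of Table \ref{tPfaffZero} are then exactly the resulting conic types: a smooth conic (a); two skew lines (b) or two coplanar lines meeting in a node (c); and the non-reduced double structures (d), (e), (f).

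With $Y$ fixed I would reconstruct $M$ and its minimal kernel generators $S$ type by type. The $6\times 2$ matrix $S$ records, at each point where $\rk M(p)=4$, the $2$-plane $\ker M(p)\subset\C^6$, i.e.\ a morphism to $\mathrm{Gr}(2,6)$ resolved along $Y$; reading off the rank-$2$ directions of $M$ along the components of $Y$ fixes a basis $B\in\GLsix$ of $\C^6$ (and shows $\ker M$ is generated in degree one, so $S$ may be taken with linear entries), while choosing coordinates $l_0,\dots,l_4$ adapted to the span of $Y$ produces the tabulated forms up to $B,B'$ and the $l_i$. When $Y$ is reducible the two rank-$2$ loci split $\C^6=\C^3\oplus\C^3$ into two $3\times 3$ skew blocks, giving (b) and (c); when $Y$ is non-reduced the two blocks degenerate and $M$ acquires the shape $\left(\begin{smallmatrix}0 & P\\ -P^{t} & Q\end{smallmatrix}\right)$ of rows (d)--(f), the residual block $Q$ recording the infinitesimal structure of $Y$.

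Finally, for the stability column I would apply the Hilbert--Mumford criterion. The decisive test is the $1$-parameter subgroup $\lambda_s\colon\Gm\to\GLsix$, $\lambda_s=\mathrm{diag}(s^{-1}I_3,\,sI_3)$: under it the forms (d), (e) satisfy $\lim_{s\to 0}\lambda_s\cdot M=\left(\begin{smallmatrix}0 & P\\ -P^{t} & 0\end{smallmatrix}\right)$, which is precisely form (f). Thus (f) lies in the orbit closures of (d) and (e), so (d), (e) are strictly semistable and non-polystable, while (f), whose orbit is closed, is the polystable representative of that $G$-equivalence class; a direct weight-and-stabiliser computation then shows (a), (b), (c) have finite stabiliser modulo the centre and closed orbit, hence are stable. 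The main obstacle, I expect, is twofold: proving \emph{completeness} --- that these six $Y$-types are the only ones and that within each type $M$ is determined up to the allowed $B,B',l_i$ transformations, with no hidden moduli --- and correctly handling the \emph{non-reduced} cases (d), (e), (f), where extracting the scheme structure of $Y$ (the embedded points and the first-order neighbourhood) from the ideal of $4\times 4$ sub-Pfaffians requires delicate local commutative algebra.
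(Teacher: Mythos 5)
Your outline attempts a self-contained geometric proof, which is more than the paper itself does: the paper's proof of Theorem \ref{tGeometryM0} is a citation to \cite{BB22} plus Macaulay2 verification of the $S$- and $Y$-columns. The central step of your argument, however, has a genuine gap. You claim that the Harris--Tu Schur--Pfaffian formula gives $[Y]=2h^3$ and that semistability then forces $Y$ to be a purely one-dimensional scheme of degree $2$. But the rank $\le 2$ locus of a skew-symmetric map $\ring(-1)^6\to\ring^6$ on $\P^4$ has expected codimension $\binom{6-2}{2}=6>4$, so the degeneracy-locus formula predicts that $Y$ is \emph{empty}, not a curve of class $2h^3$. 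That $Y$ is instead one-dimensional is an excess-intersection phenomenon special to the stratum $\Pf(M)=0$ intersected with the semistable locus, and its class cannot be read off from a universal Chern-class computation (the kernel and cokernel sheaves are not locally free and their Chern classes are not determined by $h$ alone). Indeed the conclusion you want is false as stated: in case (f) the scheme $Y$ cut out by the $4\times4$ sub-Pfaffians is the full first-order neighbourhood of a line, a one-dimensional scheme of degree $4$, and in cases (c) and (e) the scheme $Y$ has embedded points, so it is not pure-dimensional of degree $2$. This is exactly why Remark \ref{rDegree2Curve} has to treat type (f) separately via the reduced structure.

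The second gap is the one you flag yourself: completeness. Even granting that the support of $Y$ is a degree-$2$ curve, the assertion that the six rows of Table \ref{tPfaffZero} exhaust all semistable $M$ and that within each type $M$ is rigid up to the allowed $(B,B',l_i)$ is precisely the content of the theorem, and your sketch offers no mechanism for proving it; this is the case-by-case normal-form analysis carried out in \cite{BB22}, and it is also where the scheme structures (embedded points, the double structures distinguishing (d) from (e) from (f)) are actually extracted. By contrast, your final paragraph on stability is essentially sound: the one-parameter subgroup $\mathrm{diag}(s^{-1}I_3,sI_3)$ does degenerate types (d) and (e) to type (f), which identifies (f) as the polystable representative and (d), (e) as strictly semistable but not polystable, and a Hilbert--Mumford weight computation handles (a)--(c); but this part presupposes the normal forms it is meant to be classifying.
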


\begin{proof}
Most of this is proven in \cite{BB22}, and the results are summarised in Table 2 there. The information about $S$ and the more precise information about $Y$ is readily obtained using Macaulay2, see 
\cite[Table1.m2]{BB-M2}. 
\end{proof}

In the following it will be helpful to notice, that $S$ carries exactly the same information as $M$: 
\begin{proposition}\label{pSameInformation}
Let $M$ and $S$ be matrices as in Table \ref{tPfaffZero}. Then $M$ represents the syzygy module of $S^t$. Furthermore the ideal generated by the $2\times 2$ minors of $S$ is equal to the one generated by the $4 \times 4$ Pfaffians of $M$. 
\end{proposition}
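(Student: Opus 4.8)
The plan is to treat both assertions uniformly, without appealing to the individual normal forms, by packaging the data of $M$ and $S$ into one complex and by invoking the classical Pfaffian adjugate.

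I would begin with the second assertion, which is more self-contained. Let $\Pf_{ij}(M)$ be the signed $4\times 4$ sub-Pfaffian obtained by deleting rows and columns $i,j$, and let $\Pfad(M)$ be the skew-symmetric $6\times 6$ matrix with these entries, characterised by $M\,\Pfad(M)=\Pf(M)\,I$. Since $\Pf(M)=0$ we get $M\,\Pfad(M)=0$, so every column of $\Pfad(M)$ lies in $\ker M=\im S$; hence $\Pfad(M)=S\,T$ for a (unique, as $S$ is injective) $2\times 6$ matrix $T$ of linear forms. Passing to $F=\Frac(\ring)$, the matrix $\Pfad(M)$ is skew-symmetric of rank $2$ with column space $\im_F S$, hence decomposable: $\Pfad(M)=w\,S\,J\,S^t$ over $F$, where $J=\left(\begin{smallmatrix}0&1\\-1&0\end{smallmatrix}\right)$ and $w\in F$. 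Cancelling the injective $S$ gives $T=w\,J\,S^t$; since $T$ and $S^t$ have linear entries and $S$ involves at least two coprime linear forms (visible in Table \ref{tPfaffZero}), the scalar $w$ is forced to be a nonzero constant. Therefore $\Pf_{ij}(M)=w\cdot(\text{the }2\times 2\text{ minor of }S\text{ on rows }i,j)$, so the two ideals coincide.

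For the first assertion I would exploit that, because $M$ is skew-symmetric, the relations among its columns are again generated by the columns of $S$ for every internal grading. Concretely $MS=0$ and, using $M^t=-M$, also $S^tM=0$, so
\[
\mathcal{C}\colon\quad 0 \longrightarrow \ring(-1)^2 \xrightarrow{\,S\,} \ring^6 \xrightarrow{\,M\,} \ring(1)^6 \xrightarrow{\,S^t\,} \ring(2)^2
\]
is a complex of free modules (it is, up to the identification $M^t=-M$, the dual of the resolution $0\to\ring(-2)^2\xrightarrow{S}\ring(-1)^6\xrightarrow{M}\ring^6\to\coker M\to 0$ prolonged once to the left). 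Exactness of $\mathcal{C}$ at $\ring(1)^6$ says precisely that $\im M=\ker S^t$, i.e. that $M$ represents the syzygies of $S^t$. I would establish this by the Buchsbaum--Eisenbud acyclicity criterion: the generic ranks are $\rk S=2$ and $\rk M=4$, so the rank conditions $2+4=6$, $4+2=6$, $2+0=2$ hold, and since $\ring$ is Cohen--Macaulay the required depth bounds are just codimension statements. Now $I_2(S)$ and $I_4(M)$ both cut out the rank-at-most-two locus $Y$ — for $I_2(S)$ this is exactly the ideal equality just proved, while $V(I_4(M))=Y$ because a skew matrix has even rank — and by Theorem \ref{tGeometryM0} the scheme $Y$ is a curve, hence of codimension $3$ in $\P^4$. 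Thus $\operatorname{depth}I_2(S)=3$ and $\operatorname{depth}I_4(M)=3$, which dominate the bounds $\ge 3,\ \ge 2,\ \ge 1$ demanded by the criterion, and $\mathcal{C}$ is exact.

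I expect the main subtlety to be organisational rather than computational: one must first secure the ideal equality in order to know $\codim I_2(S)=3$, and only then feed this into the Buchsbaum--Eisenbud verification of the first assertion; reversing the order would be circular. The two genuinely geometric inputs — that $\Pfad(M)$ does not vanish identically (so $w\neq 0$) and that $Y$ is one-dimensional — are both supplied by Theorem \ref{tGeometryM0}, so no case-by-case analysis of Table \ref{tPfaffZero} is needed beyond reading off that $S$ contains two independent linear forms.
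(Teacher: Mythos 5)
Your proof is correct, but it takes a genuinely different route from the paper's. The paper's proof is computational and case-by-case: for each normal form in Table \ref{tPfaffZero} it computes the syzygy module of $S^t$ in Macaulay2, observes that it is generated by six linear syzygies, and concludes that the six linearly independent columns of $-M$ must generate it; the ideal equality is likewise checked by direct computation. You replace both computations by uniform structural arguments: the adjugate identity $M\cdot\Pfad(M)=\Pf(M)\,I=0$, combined with $\ker M=\im S$ (which is the defining property of $S$ in Theorem \ref{tGeometryM0}, so there is no circularity), forces $\Pfad(M)=w\,S J S^t$ with $w$ a nonzero constant, and this gives the ideal equality in the sharper form that each $4\times4$ sub-Pfaffian is a \emph{fixed} scalar multiple of the corresponding $2\times 2$ minor of $S$; the Buchsbaum--Eisenbud criterion, fed with the grade-$3$ statement that this equality makes available, then yields exactness of the four-term complex and hence the syzygy assertion. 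The only inputs you take from the classification are that $Y$ is a curve (so $\rk_{\Frac(\ring)} M=4$ and $\operatorname{grade} I_2(S)=3$) and that the entries of $S$ have no common factor, both visible in Table \ref{tPfaffZero}; you also correctly flag that the ideal equality must be proved first, and correctly handle the point that the depth condition for the middle map concerns the ideal of all $4\times4$ minors of $M$ rather than the principal sub-Pfaffians (same radical, since a skew matrix has even rank, and grade depends only on the radical over the Cohen--Macaulay ring $\ring$). What your route buys is independence from computer algebra, uniformity across the six types, and the extra proportionality statement; what it costs is reliance on the Pfaffian adjugate and the acyclicity criterion where the paper needs only a finite verification.
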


\begin{proof}
In all cases we can compute that the syzygy module of $S^t$ is represented by a not necessarily skew $6 \times 6$ matrix of linear forms. See \cite[\ttfamily{Table1.m2}]{BB-M2}. Since $S^tM^t =0$ we have that the columns of $M^t = -M$ are linear syzygies of $S^t$. Since these columns are linearly independent in all cases, $-M$ and hence $M$ represents the syzygy module of $S^t$. 

The last statement of the Proposition follows by a direct computation done in \cite[\ttfamily{Table1.m2}]{BB-M2}. 
\end{proof}

\section{The main theorem}\label{sStilde}
Here we prove

\begin{theorem}\label{tClosure}
Let $[M]$ be a point in $\skss_0$ and $X = V(F) \subset \P^4$ a cubic threefold. Let $\overline{Y} \subset X$ be the scheme defined by the $4 \times 4$ Pfaffians of $M$ and the cubic polynomial $F$. Then $([M],[F])$ is a point in $\wskss_0$ if and only if $\overline{Y}$ contains a curve of degree $2$. 
\end{theorem}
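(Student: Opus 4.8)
The plan is to first collapse the two formulations of the condition into one, and then to realize membership in the closure $\wskss_0$ through the valuative criterion. By Theorem \ref{tGeometryM0} the rank-at-most-$2$ locus $Y$, cut out by the $4\times 4$ Pfaffians of $M$, is a curve of degree $2$ in every case of Table \ref{tPfaffZero}. Since $\overline Y = Y\cap X$ scheme-theoretically, and $Y$ is a pure one-dimensional scheme of degree $2$, any degree-$2$ curve contained in $\overline Y\subseteq Y$ must be all of $Y$. Hence ``$\overline Y$ contains a curve of degree $2$'' is equivalent to $Y\subseteq X$, i.e.\ to $F\in H^0(\mathcal I_Y(3))$, the space of cubics vanishing on $Y$. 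So it suffices to prove that $([M],[F])\in\wskss_0$ if and only if $F$ vanishes on $Y$. (Proposition \ref{pSameInformation}, which identifies $I_Y$ with the ideal of $2\times 2$ minors of $S$, gives a convenient second set of equations for the computations below.)

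Because $\wskss$ is by definition the closure of $\pi_1^{-1}(\skss-\skss_0)$, the valuative criterion (working over $\C[[t]]$) says that $([M],[F])\in\wskss_0$ precisely when there is an arc of skew matrices $M(t)$, with a suitable representative having $M(0)=M$ and $\Pf(M(t))\neq 0$ for $t\neq 0$, whose Pfaffian point converges to $[F]$. Writing $\Pf(M(t))=t^kF_k+O(t^{k+1})$ with $F_k\neq 0$, this means exactly that $F_k$ is proportional to $F$. The theorem thus reduces to identifying which cubics occur as leading Pfaffian coefficients $F_k$ along arcs through $M$.

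For the implication ``$F$ vanishes on $Y$ $\Rightarrow$ $([M],[F])\in\wskss_0$'' I would use a first-order arc $M(t)=M+tM_1$. Since $\Pf(M)=0$, the gradient identity $d\Pf_M=\Pfad(M)$ gives $F_1=\sum_{i<j}\varepsilon_{ij}\,P_{ij}\,(M_1)_{ij}$ with signs $\varepsilon_{ij}\in\{\pm1\}$, where the $P_{ij}$ are the $4\times 4$ Pfaffians of $M$, i.e.\ the entries of $\Pfad(M)$. As $M_1$ runs over all skew matrices of linear forms, $F_1$ runs over the degree-$3$ part of the ideal $I_Y=(P_{ij})$. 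It then remains to check that this ideal has the expected degree-$3$ part, namely all of $H^0(\mathcal I_Y(3))$ — equivalently, that $(P_{ij})$ is saturated in degree $3$ — which I would verify case by case in Macaulay2 from the normal forms of Table \ref{tPfaffZero}, as in \cite{BB-M2}. Granting this, any nonzero $F$ vanishing on $Y$ can be written $F=F_1$; since semistability is open and $\Pf(M(t))=tF+O(t^2)\neq 0$, the arc lies in $\pi_1^{-1}(\skss-\skss_0)$ and realizes $([M],[F])$ in $\wskss_0$.

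For the converse I would show that the leading coefficient $F_k$ along \emph{any} arc through $M$ vanishes on $Y$. The key point is that evaluation at a point commutes with the Pfaffian, so for $p\in Y$ the constant skew matrix $M(0)(p)$ has rank $\le 2$, and a perfect-matching (corank) estimate shows $\Pf\big(M(t)(p)\big)=\Pf(M(t))(p)$ vanishes in $t$ to order at least $2$; this forces $F_k(p)=0$ as soon as $k=1$. The main obstacle is to extend this to arcs with $k\ge 2$, where the crude order bound is insufficient: I expect to need a sharper estimate for $\ord_t\Pf(M(t)(p))$ in terms of the corank of $M(0)(p)$ — accounting for the $4\times 4$ Pfaffians of $M$ that vanish identically in the sparse normal forms — together with a bound on $k$, or else an argument that every point of $\wskss_0$ is already attained by a first-order arc. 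Combining both directions with the reformulation of the first paragraph yields the theorem; the degree-$3$ saturation check and this all-orders estimate are the two places where the explicit classification of Table \ref{tPfaffZero} and the Macaulay2 computations carry the weight.
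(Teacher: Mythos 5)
Your overall strategy --- realizing membership in $\wskss_0$ by arcs through $([M],[F])$, using the Pfaffian Laplace expansion to identify the first-order leading coefficient of $\Pf(M+tM_1)$ with an element of the degree-$3$ part of the ideal of $4\times 4$ Pfaffians, and the proportionality of the leading coefficient with $F$ --- is the right starting point and coincides with the paper's treatment of types (a), (b), (d) (Case 1 of Propositions \ref{pOneDirection} and \ref{pOtherDirection}). But there are two genuine gaps. The first is that your opening reduction is false: the scheme $Y$ cut out by the $4\times 4$ Pfaffians is a pure one-dimensional degree-$2$ curve only in types (a), (b), (d). In types (c) and (e) it carries an embedded point spanning $\P^4$, and in type (f) it is the full first-order infinitesimal neighbourhood of a line, a non-reduced scheme of degree $4$ with ideal $I_\ell^2$. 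Hence ``$\overline{Y}$ contains a curve of degree $2$'' is strictly weaker than ``$F$ vanishes on the scheme $Y$'' (cf.\ Remark \ref{rDegree2Curve}: for type (f) the condition is $F\in I_\ell$, not $F\in I_\ell^2$). Consequently your first-order arc only reaches cubics in the degree-$3$ part of the Pfaffian ideal, which for types (c), (e), (f) is a proper subspace of the cubics allowed by the theorem, so the ``if'' direction is unproved for those types. The paper closes this by exhibiting type (c)/(f) matrices as limits of type (b) matrices cutting out pairs of skew lines on a smooth cubic through the given configuration, by an explicit conjugation-and-rescaling degeneration for type (e), and by deforming a singular $X$ to smooth cubics containing the same $Z$ (Cases 2--4 of Proposition \ref{pOtherDirection}).

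The second gap is the one you name yourself: the ``only if'' direction for arcs whose Pfaffian vanishes to order $k\ge 2$, which is precisely where the paper's real work lies. Your fallback hope that every point of $\wskss_0$ is attained by a first-order arc fails for types (c), (e), (f): there $T_p\mathcal{T}$ can coincide with $T_p\mathcal{T}_0$, and the paper must pass to $2$-jets via the degree-$2$ approximation of the tangent cone and the Hensel-type lifting of Proposition \ref{pHensel}, supported by the tangent-space and tangent-cone computations of Table \ref{tTangentS} (types (c), (e)), and for type (f) to formal arcs of arbitrary order combined with a ramified base change $t=s^r$ producing a section of the relative rank-$\le 2$ locus; only then does the Laplace expansion give $G(x)=0$, and only on the reduced support --- which, by the corrected form of the condition, is exactly what is needed. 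Without these ingredients neither direction of the theorem is established outside types (a), (b), (d).
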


The proof of this will occupy the rest of the paper. We start by laying the groundwork for some local computations involving jets. 

\begin{remark}\label{rDegree2Curve}
Notice that for matrices $M$ of type $(a) - (e)$ the degree $2$ curve is uniquely defined by $M$. For matrices of type $(f)$ the condition is satisfied for all $F$ such that $V(F)$ contains the rank $2$ locus of $M$ with {\sl reduced} scheme structure, i.e. the line $l_0=l_1=l_2=0$.
\end{remark}

Below by an \emph{algebraic scheme} we mean a scheme separated and of finite type over $\mathrm{Spec}\, (\C)$. 

\begin{definition}\label{dJets}
Let $X$ be an algebraic scheme and $p\in X$ a closed point. Put 
\[
T_n = \mathrm{Spec}\, (\C[t]/(t^{n+1})).
\]
An \emph{$n$-jet starting at $p$, or centred at $p$, in $X$} is a morphism of algebraic schemes $T_n \to X$ mapping the closed point of $T_n$ to $p$. 
We sometimes write the residue class of $t$ in $\C[t]/(t^{n+1})$ as $\epsilon$, and elements $j \in \C[t]/(t^{n+1})$ as
\[
j = j_0 + j_1\epsilon + \dots + j_n\epsilon^n .
\]
Note that if $\mathrm{Spec}\, \bigl( \C[x_1, \dots , x_n]/(f_1 (x_1, \dots , x_n), \dots , f_r (x_1, \dots , x_n)) \bigr)$ is an affine chart on $X$ containing $p$ and $p= (a_1, \dots , a_n)$, the datum of an $n$-jet starting at $p$ is the same as the datum of a solution $(\bar{a}_1, \dots , \bar{a}_n)$ of the equations $f_1=\dots =f_r =0$ where $\bar{a}_i \in \C[t]/(t^{n+1})$ is a lift of $a_i$.
\end{definition}

\begin{definition}\label{dOperationsOnJets}
The natural ring homomorphism
\[
\C[t]/(t^{n+1}) \to  \C[t]/(t^{m+1}) 
\]
for $m \le n$ induces a corresponding operation on jets: given an $n$-jet $j\colon T_{n} \to X$ centred at a point $p$ in an algebraic scheme $X$, we get an $m$-jet 
\[
\tau_{\le m} (j) \colon T_m \to X,
\]
called the \emph{truncation} of $j$ in degrees $\le m$.

\medskip

Moreover, the ring homomorphism
\begin{align*}
\C[t]/(t^{n+1}) & \to  \C[s]/(s^{rn+1}) \\
\bar{t} & \mapsto \bar{s}^r
\end{align*}
induces an operation on jets: given an $n$-jet $j\colon T_{n} \to X$ centred at a point $p$ in an algebraic scheme $X$, we get an $rn$-jet 
\[
\gamma_r (j) \colon T_{rn}\to X
\]
called the \emph{$r$-fold covering} of $j$.
\end{definition}

\begin{proposition}\label{pTangentS}
Let $[M]$ be a point in $\skss_0$. Then the codimension of the tangent space $T_{[M]} (\skss_0)$ in the ambient $T_{[M]}(\skss)$ is given by the second column in Table \ref{tTangentS}. 
\end{proposition}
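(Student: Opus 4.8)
The plan is to compute the tangent space $T_{[M]}(\skss_0)$ directly from the defining equations of $\skss_0$ inside $\skss$, working case by case through the six normal forms $(a)$--$(f)$ of Table \ref{tPfaffZero}. Since $\skss \subset \sk = \P\bigl((\C^5)^\vee \otimes \Lambda^2\C^6\bigr)$ is an open subset of projective space, its tangent space at $[M]$ is the full ambient space of skew matrices of linear forms modulo the line spanned by $M$ itself; concretely, I would pass to an affine chart by dividing through a fixed nonzero entry, so that a tangent vector is represented by a skew $6\times 6$ matrix $N$ of linear forms with the normalisation that the chosen entry of $N$ vanishes. The subscheme $\skss_0$ is cut out by the condition $\Pf(M)=0$, i.e. by the single Pfaffian, which is a polynomial of degree $3$ in the linear forms (hence a cubic in $x_0,\dots,x_4$ with coefficients that are degree-$3$ polynomials in the matrix entries). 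The tangent space $T_{[M]}(\skss_0)$ is therefore the kernel of the differential of $\Pf$ at $M$, and its codimension equals the rank of that differential.

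The key computation is to make the linearisation of the Pfaffian explicit. First I would recall the standard formula for the derivative of the Pfaffian: for a skew matrix $M$ and a skew perturbation $N$,
\[
\Pf(M + \varepsilon N) = \Pf(M) + \tfrac{1}{2}\,\varepsilon\,\operatorname{tr}\bigl(\Pf(M)\,M^{-1} N\bigr) + O(\varepsilon^2),
\]
valid where $M$ is invertible, but more usefully I would use the cofactor expansion: the linear term in $N$ is a sum over the entries $N_{ij}$ weighted by the corresponding $4\times 4$ sub-Pfaffians of $M$. Since the matrices $M$ in Table \ref{tPfaffZero} have vanishing Pfaffian but nonzero $4\times 4$ sub-Pfaffians (these generate the ideal cutting out $Y$), this directional derivative $d\Pf_M(N) = \sum_{i<j} (\text{sub-Pfaffian}_{ij})\, N_{ij}$ is a genuinely nonzero linear functional in the entries of $N$, valued in degree-$3$ forms. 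The codimension we seek is the dimension of the image of the map $N \mapsto d\Pf_M(N)$, i.e. the dimension of the span, inside the space of cubics, of the products $(\text{sub-Pfaffian}_{ij})\cdot l_k$ where $l_k$ ranges over the linear forms entering $N_{ij}$. I expect each entry of Table \ref{tTangentS} to be precisely the number of independent cubics arising this way.

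The main obstacle, and where the real work lies, is that this is genuinely a case-by-case linear-algebra calculation whose answer depends sensitively on the collisions and coincidences among the linear forms $l_0,\dots,l_4$ in each normal form --- for instance, type $(f)$ with its single rank-$2$ locus will behave very differently from type $(a)$ with a smooth conic. Rather than carry these out by hand I would organise them as a finite computation: for each of the six matrices, list the $4\times 4$ sub-Pfaffians explicitly, form all products with arbitrary linear forms in the free entries of the tangent vector $N$, and compute the rank of the resulting collection of cubics in $\C[x_0,\dots,x_4]_3$. This is exactly the kind of determinantal rank computation that the authors have already automated in Macaulay2 for the companion statements (cf. \cite[\ttfamily{Table1.m2}]{BB-M2}), so I would verify the entries of Table \ref{tTangentS} by the same machine computation, after checking by hand in one representative case (say the stable type $(a)$) that the symbolic derivative of the Pfaffian has been set up correctly and that the normalisation by the projective chart has been accounted for. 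The only conceptual subtlety to flag is ensuring the perturbation $N$ ranges over all skew matrices of linear forms and not merely those preserving the normal form, so that we compute the codimension in the full ambient $T_{[M]}(\skss)$ as claimed.
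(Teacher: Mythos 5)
Your proposal is correct and follows essentially the same route as the paper: both compute the first-order term of $\Pf(M+\epsilon M')$ for a general skew perturbation $M'$, identify it via the Pfaffian Laplace expansion as a linear functional in the entries of $M'$ valued in cubics, and obtain the codimension as the rank of this linear map, with the actual rank computed case by case in Macaulay2. The paper phrases this as the rank of the system of linear equations on the coefficients $a_{ijk}$ rather than as the dimension of the span of the cubics $(\text{sub-Pfaffian}_{ij})\cdot x_k$, but these are the same number.
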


\begin{proof}
Let $M' = \left( \sum_{k=0}^4 a_{ijk}x_k \right)_{1\le i,j\le 6}$ be a general skew matrix of linear forms. Then we consider a $1$-jet $M+\epsilon M'$ at $M$ in $\skss$. Then we get 
\[
\Pf (M +\epsilon M') = \epsilon F' 
\]
and $M+\epsilon M'$ is a $1$-jet in $\skss_0$ if and only if $F' =0$. The coefficients of $F'$ are linear in the $a_{ijk}$, so we obtain a set of linear equations on the $a_{ijk}$ whose rank is the codimension of the tangent space. The computation of the rank is done in \cite[Table2.m2]{BB-M2}. 
\end{proof}

Consider the tangent cone of an algebraic scheme $X$ at the point $p$. Choose an affine chart as above with $p= (a_1, \dots , a_n) =(0, \dots , 0)$. 
The tangent cone $TC_p (X)$ is the subscheme of $\C^n$ given as the zero locus of the leading terms of all elements in the ideal $I_X=(f_1, \dots , f_r)$. 

\

If $I$ is the ideal of the tangent cone $TC_p(X)$ and $I_{\le 2}$ is the degree at most $2$ part of this ideal, we call the
vanishing locus $V(I_{\le 2}) =: \bigl( TC_p(X) \bigr)_2$ the {\sl degree two approximation of the tangent cone}.

The tangent cone of a variety in a given point is often difficult to compute, but 
the degree $2$ approximation still has some useful computational properties. In particular a version of Hensel lifting still holds:

\begin{proposition} \label{pHensel}
Let $X \subset \C^n$ be a variety, $p \in X$ a point. Then
$p' \in \C^n $ is a point in the degree $2$ approximation of the tangent cone if and only if there exists a point $p'' \in \C^n$ such that the $2$-jet
\[
	j := p + \epsilon p' + \epsilon^2 p''
\]
is contained in $X$.
\end{proposition}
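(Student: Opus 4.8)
The plan is to choose an affine chart with $p = (0,\dots,0)$, write $I_X = (f_1,\dots,f_r)$, and reduce both sides of the equivalence to explicit linear conditions obtained by expanding $f(j)$ modulo $\epsilon^3$. First I would note that $j = \epsilon p' + \epsilon^2 p''$ lies in $X$ iff $f(j) \equiv 0 \pmod{\epsilon^3}$ for every $f \in I_X$; it suffices to test this on a generating set, but it is cleanest to argue with arbitrary $f \in I_X$. Writing $f = f_{(1)} + f_{(2)} + \cdots$ (no constant term, since $p \in X$), the crucial elementary observation is that substituting $j$ into a homogeneous form of degree $k$ yields a power series of order $\ge k$ in $\epsilon$; hence only $f_{(1)}$ and $f_{(2)}$ survive modulo $\epsilon^3$.

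Carrying out the expansion and sorting by the order $d = \ord(f)$ of $f$ at $p$, I obtain: for $d \ge 3$ no condition; for $d = 2$ the single condition $f_{(2)}(p') = 0$; and for $d = 1$ the two conditions $f_{(1)}(p') = 0$ (coefficient of $\epsilon$) and $f_{(1)}(p'') + f_{(2)}(p') = 0$ (coefficient of $\epsilon^2$). Consequently a suitable $p''$ exists iff the following hold: (A) $f_{(1)}(p') = 0$ for all $f$ with $\ord f = 1$; (B) $f_{(2)}(p') = 0$ for all $f$ with $\ord f = 2$; and (C) the inhomogeneous linear system $f_{(1)}(p'') = -f_{(2)}(p')$, ranging over all $f$ with $\ord f = 1$, is solvable in the unknown $p''$.

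Next I would identify $V(I_{\le 2})$ with the conjunction of (A) and (B). A homogeneity-and-degree count on expressions $\sum_i g_i\,\mathrm{in}(f_i)$ shows that the degree-one graded piece of $I = \mathrm{in}(I_X)$ is $I_1 = \mathrm{span}\{f_{(1)} : \ord f = 1\}$ and the degree-two piece is $I_2 = \mathrm{span}\{f_{(2)} : \ord f = 2\} + (\text{linear forms})\cdot I_1$. Since $I_{\le 2}$ is generated by $I_1$ and $I_2$, and the products $(\text{linear forms})\cdot I_1$ vanish automatically once $I_1$ does, a point $p'$ lies in $\bigl(TC_p(X)\bigr)_2 = V(I_{\le 2})$ exactly when (A) and (B) hold.

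The heart of the proof—and the step I expect to be the main obstacle—is then to show that (C) is automatic given (A) and (B); this is the promised Hensel-type lifting of a first-order solution to a second-order one. I would argue via consistency of the linear system: $f_{(1)}(p'') = -f_{(2)}(p')$ admits a solution iff every $\C$-linear relation $\sum_k c_k f^{(k)}_{(1)} = 0$ among the left-hand sides forces $\sum_k c_k f^{(k)}_{(2)}(p') = 0$. Given such a relation, set $g := \sum_k c_k f^{(k)} \in I_X$; then $g_{(1)} = 0$, so $\ord g \ge 2$ and $g_{(2)} = \sum_k c_k f^{(k)}_{(2)}$. If $\ord g = 2$ then $g_{(2)} = \mathrm{in}(g) \in I_2$, whence $g_{(2)}(p') = 0$ by (B); if $\ord g \ge 3$ then $g_{(2)} = 0$. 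In either case $\sum_k c_k f^{(k)}_{(2)}(p') = 0$, so the system is consistent and the desired $p''$ exists. The subtlety to watch is precisely that the tangent cone is governed by initial forms of \emph{all} elements of $I_X$, not merely of a chosen generating set, so that second-order obstructions to lifting are captured exactly by the quadratic part $I_2$ of the tangent cone ideal; once this is in place the equivalence follows in both directions.
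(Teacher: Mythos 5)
Your argument is correct, and its skeleton is the same as the paper's: expand $f(j)$ modulo $\epsilon^3$, note that only the linear and quadratic homogeneous parts of $f$ contribute, identify the vanishing of those contributions with membership in $V(I_{\le 2})$, and reduce the existence of $p''$ to the solvability of an inhomogeneous linear system. Where you diverge is in how solvability is secured. The paper fixes finitely many $f_1,\dots,f_m\in I_X$ whose initial forms generate $I_{\le 2}$, normalises them so that the linear parts $f_1',\dots,f_k'$ are linearly independent, and then the system $f_i'(p'')=-f_i''(p')$, $i=1,\dots,k$, is trivially solvable; it then asserts that a $2$-jet lies on $X$ if and only if these $m$ polynomials vanish on it. You instead impose the jet condition for \emph{every} $f\in I_X$ and prove consistency of the resulting overdetermined system by converting a relation $\sum_k c_k f^{(k)}_{(1)}=0$ into the element $g=\sum_k c_k f^{(k)}\in I_X$ of order $\ge 2$ and applying condition (B) to its quadratic part. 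This buys something genuine: the paper's ``if and only if'' is immediate only when the chosen $f_i$ generate $I_X$, which is not how they were selected, so the forward implication there tacitly requires exactly the supplementary step you supply (an arbitrary $f\in I_X$ vanishes on the jet once the normalised representatives do, because $f$ minus a suitable combination of them has order $\ge 2$ and its initial form lies in $I_2$). Your closing remark --- that the tangent cone is controlled by initial forms of all elements of $I_X$, not merely of a generating set --- is precisely the point at which a careless version of this lifting would break, and making it explicit is the main value added by your route.
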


\begin{proof}
Without restriction we can assume $p=0$. Let $f_1,\dots,f_m \in I_X$ be polynomials whose initial terms generate $I_{\le 2}$. We can assume that these have expansions
\begin{align*}
	f_1 &:=  f_1' +  f_1'' + h.o.t. \\
	\vdots & \quad \quad \vdots \\
	f_k &:=  f_k' +  f_k'' + h.o.t. \\
	f_{k+1} &:= \quad \quad f_{k+1}'' + h.o.t. \\
	\vdots & \quad \quad \quad \quad \quad \vdots \\
	f_{m} &:= \quad \quad f_{m}'' + h.o.t. \\
\end{align*}
with $f_1' \dots f_k'$ linearly independent linear forms, and $f_{1}'',\dots, f_{m}''$ homogeneous quadratic polynomials, of which $f_{k+1},\dots,f_{m}$ are linearly independent. 

Now a $2$-jet $j = p+\epsilon p' + \epsilon^2 p''$ lies on $X$ if and only if $f_i(j) = 0$ for $i=1,\dots,m$.  If $p=0$ and $p'$ is in the degree $2$ approximation of the tangent cone, then this system of equations reduces to
\[
	0=f_i(j) = f_i'(p'') + f_i''(p')		\quad \quad i=1,\dots,k.
\]
For given $p'$ this is a set of linear equations for $p''$. Since $f_1',\dots,f_k'$ are linearly independent, this set of linear equations has a solution.

Let conversely $j=p+\epsilon p' + \epsilon p''$ be a $2$-jet on $X$. Evaluating the first $k$ equations in $j$ and considering the terms linear in $\epsilon$, we get that $f_1'(p')=  \dots = f_k'(p') = 0$. Evaluating the remaining equations in $j$ gives $f_{k+1}''(p') = \dots = f_m''(p') = 0$. So $p'$ lies on the degree $2$ approximation of the tangent cone.
\end{proof}

\begin{remark}\label{rHensel}
In the situation of the Proposition above the proof also shows that we can compute the degree $2$ approximation of the tangent cone in the folowing way. 

\begin{enumerate}
\item[(1)] Consider a $2$-jet
\[
	j = p + \epsilon p' + \epsilon^2 p''
\]
with $p'$ in $T_p X$ general and $p'' \in \C^n$ general. 
\item[(2)] Evaluate the generators of $I_X$ in $j$. This yields a vector space $V$ of polynomials quadratic in $p'$ and linear in $p''$. 
\item[(3)] The generators of $I_2$ are those elements of $V$ whose linear part vanishes. This can be computed by solving a linear system of equations.
\end{enumerate}
\end{remark}

\

\begin{proposition}\label{pTangentCone1}
Let $[M]$ be a point in $\skss_0$. Let $I$ be the ideal of the tangent cone $TC_{[M]} (\skss_0) \subset T_{[M]} (\skss )$. Then the degree $2$ part $I_2$ is the same as the degree $2$ part of the ideal listed in the third column of Table \ref{tTangentS}. 
\end{proposition}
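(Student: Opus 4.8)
The plan is to repeat, one order higher, the jet computation that produced Proposition~\ref{pTangentS}, now invoking the Hensel-type description of the degree two approximation in Remark~\ref{rHensel}. Recall that $\skss_0$ is cut out in $\sk$ by the vanishing of the Pfaffian, i.e.\ by the coefficients of $\Pf(M)$ with respect to the cubic monomials in $x_0,\dots,x_4$, each of which is a cubic polynomial in the entries of $M$. To work in the tangent cone I would center affine coordinates at $[M]$, writing a nearby matrix as $M+N$ with $N$ a skew matrix of linear forms, i.e.\ a general tangent direction in $T_{[M]}(\sk)$. Since $\Pf$ is homogeneous of degree $3$ in the entries and $\Pf(M)=0$, its Taylor expansion at $M$ reads
\[
\Pf(M+N)=L(N)+Q(N)+\Pf(N),
\]
with $L$ linear and $Q$ quadratic in $N$. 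Splitting by the cubic monomials in $x$ gives generators $g_\alpha(N)=L_\alpha(N)+Q_\alpha(N)+\dots$ of the local ideal of $\skss_0$; the linear forms $L_\alpha$ are precisely the equations whose rank yielded the tangent space in Proposition~\ref{pTangentS}, and the quadrics $Q_\alpha$ are the new input.

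Next I would feed this into Remark~\ref{rHensel}. Take a $2$-jet $N=\epsilon N'+\epsilon^2 N''$ with $N'\in T_{[M]}(\skss_0)$ general and $N''$ general, so that $L_\alpha(N')=0$ for all $\alpha$. Then the $\epsilon$-coefficient of $g_\alpha$ vanishes and the $\epsilon^2$-coefficient equals $L_\alpha(N'')+Q_\alpha(N')$. By Remark~\ref{rHensel} the relevant quadric generators are exactly those combinations whose linear term in $N''$ vanishes, that is, the quadrics $\sum_\alpha c_\alpha Q_\alpha(N')$ attached to linear relations $\sum_\alpha c_\alpha L_\alpha=0$ among the linear parts. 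These new quadric generators, together with the linear generators $L_\alpha$, determine the degree two graded piece $I_2$ appearing in Table~\ref{tTangentS}. In this way the computation reduces entirely to finite linear algebra over $\C$: determine the linear syzygies of the family $\{L_\alpha\}$ and transport them to the family $\{Q_\alpha\}$.

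Concretely I would carry this out separately for each of the six normal forms $(a)$--$(f)$ of Table~\ref{tPfaffZero}. Because every such $M$ involves only the linear forms $l_0,\dots,l_4$ and is highly sparse, the systems of $L_\alpha$ and $Q_\alpha$ are small and the syzygy-and-transport step is mechanical; I would run it in Macaulay2, as in \cite[Table2.m2]{BB-M2}, and compare the resulting quadrics with the third column of Table~\ref{tTangentS}.

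The main obstacle is not conceptual but the case-by-case bookkeeping: one must verify for all six matrices that the quadrics produced by this procedure generate exactly the claimed degree two part and nothing more. A secondary point warranting care is that the generators $g_\alpha$ need not form a standard basis, so the naive initial forms of a generating set could in principle miss elements of the tangent cone ideal; this is precisely what the intrinsic, jet-theoretic characterization in Proposition~\ref{pHensel} circumvents, guaranteeing that the output of Remark~\ref{rHensel} is genuinely $I_2$ and not merely the degree two part of the ideal of initial forms of the $g_\alpha$.
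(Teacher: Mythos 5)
Your proposal is correct and follows essentially the same route as the paper: both expand the Pfaffian along a $2$-jet $M+\epsilon M'+\epsilon^2 M''$ with $M'$ tangent to $\skss_0$, identify the degree-two part of the tangent cone ideal with those combinations of the $\epsilon^2$-coefficients in which the second-order unknowns cancel (justified by Proposition~\ref{pHensel} and Remark~\ref{rHensel}), and delegate the resulting linear algebra for each normal form to the Macaulay2 script \cite[Table2.m2]{BB-M2}.
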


\begin{proof}
Let $M' = \left( \sum_{k=0}^4 a_{ijk}x_k \right)_{1\le i,j\le 6}$ and $M''=  \left( \sum_{k=0}^4 b_{ijk}x_k \right)_{1\le i,j\le 6}$ be general skew matrices of linear forms. We consider a $2$-jet $M+\epsilon M'+\epsilon^2 M''$ at $M$ in $\skss$ and compute 
\[
\Pf (M +\epsilon M'+\epsilon^2 M'' ) = \epsilon F' +\epsilon^2 F''.
\]
We choose $M'$ such that $F'=0$. As explained in the proof of Proposition \ref{pTangentS} this means that $M'$ represents a tangent vector to $\skss_0$ at $[M]$. The coefficients of $F''$ are homogeneous of degree two where $\deg a_{ijk}=1$ and $\deg b_{ijk}=2$. The quadrics in the table are those linear combinations of the coefficients of $F''$ that no longer contain any of the $b_{ijk}$. These are contained in the ideal of the tangent cone by Remark \ref{rHensel}. Finding those linear combinations amounts to solving a linear system of equations and is done in \cite[Table2.m2]{BB-M2}. 
\end{proof}

\begin{table}
\begin{tabular}{|c|c|c|c| c|}
\hline 
 type of $[M]$     & $\codim T_{[M]} (\skss_0)$  & ideal of $TC_{[M]} (\skss_0)$ in $T_{[M]} (\skss_0)$  \\
\hline 
                  (a)  & 28  & $(0)$  \\ \hline
                  (b)  & 27 & $(0)$ \\ \hline
                  (c) & 26 &
                  $({a}_{124}-{a}_{354},{a}_{024}-{a}_{344})\cdot
                  ({a}_{054})$
                   \\ \hline
                  (d) & 27 & $(0)$ \\ \hline
                  (e) & 26 & $(a_{454}, a_{354})\cdot(a_{014})$ \\ \hline
                  & & $\mathrm{minors}_{2\times2} \bigl(N_1|N_2\bigr) \cap \mathrm{minors}_{2\times2}\begin{pmatrix} N_1 \\ N_2 \end{pmatrix}$ 
                  \\
                  & &with \\
                  (f)& 22&$N_1 = \begin{pmatrix}
      {a}_{124}&{a}_{454}\\
      {{a}_{024}}&{a}_{354}\\
      {a}_{014}&{a}_{344}
      \end{pmatrix}
      $
      \\
       & &\\
      &&$
      N_2=\begin{pmatrix}
      {a}_{123}&{a}_{453}\\
      {{a}_{023}}&{a}_{353}\\
      {a}_{013}&{a}_{343}
      \end{pmatrix}
      $                  
                  \\
                  \hline
\end{tabular}
\medskip
\caption{
}
\label{tTangentS}
\end{table}


\begin{proposition}
We have the following stratification of $\skss_0$:
\xycenter{
 (b) \ar[r] \ar[dr]& (d) \ar[dr]\\
(a) \ar[r] & (c) \ar[r] & (e) \ar[r]& (f) 
}
where $\xymatrix{(x) \ar[r] & (y)}$ means that the stratum of type $(y)$ matrices lies in the closure of the stratum of type $(x)$ matrices.
\end{proposition}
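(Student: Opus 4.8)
The plan is to realize the six types of Table~\ref{tPfaffZero} as the orbits of the group $H=\GL_5\times\GL_6$ acting on $\skss_0$, where the second factor is $G$ and the first is $\GL_5=\GL\bigl((\C^5)^\vee\bigr)$ acting by linear changes of the variables $x_0,\dots,x_4$. This action commutes with $G$ and preserves both semistability and the condition $\Pf(M)=0$, so it acts on $\skss_0$. By Theorem~\ref{tGeometryM0} every $[M]\in\skss_0$ is $H$-equivalent to exactly one normal form (a)--(f), and since these carry no parameters beyond the $l_i$, each type $\Sigma_a,\dots,\Sigma_f$ is a single $H$-orbit, hence smooth, irreducible and locally closed in $\skss_0$. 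Writing $d_X=\dim\Sigma_X$, the proposition then splits into: (i) determining the $d_X$; (ii) showing each arrow is a genuine specialisation $\Sigma_y\subseteq\overline{\Sigma_x}$; and (iii) showing there are no further containments.

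For (ii) I would write down, for each arrow, an explicit one-parameter degeneration of the source normal form: one substitutes linear forms depending on a parameter $t$ that stay independent for $t\neq0$ and degenerate at $t=0$ (conjugating by a $t$-dependent element of $G$ when convenient), and identifies the limit through its rank-$2$ locus. For $(b)\to(c)$, say, one applies to the type-$(b)$ matrix the substitution $l_4\mapsto l_1+t\,l_4$: for $t\neq0$ the five forms stay independent and the two component lines $\{l_0=l_1=l_2=0\}$ and $\{l_2=l_3=l_1+t\,l_4=0\}$ remain skew, so $[M(t)]\in\Sigma_b$, whereas at $t=0$ the lines meet and the limit has vanishing Pfaffian with rank-$2$ locus two intersecting lines, hence is of type $(c)$ by Theorem~\ref{tGeometryM0}. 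Since $\overline{\Sigma_b}$ is $H$-invariant, a single such limit point gives $\Sigma_c\subseteq\overline{\Sigma_b}$. The arrows $(a)\to(c)$, $(b)\to(d)$, $(c)\to(e)$, $(d)\to(e)$ and $(e)\to(f)$ are handled by analogous substitutions read off from the degenerations of $Y$ in Table~\ref{tPfaffZero} (collapsing the conic to a line pair, two rulings of a quadric to a double line, and so on), the remaining containments being forced by transitivity.

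The heart is (iii), for which I would use three inputs. First, a proper containment $\Sigma_Y\subseteq\overline{\Sigma_X}$ of distinct orbits forces $d_Y<d_X$; computing the $d_X$ (as $\dim H$ minus the stabiliser dimension, or with the Macaulay2 scripts behind Table~\ref{tTangentS}) and checking they strictly decrease along every arrow disposes of all ``upward'' relations and of the reverse of every arrow. Here $\dim\skss=74$, and the two maximal strata satisfy $d_a=46$, $d_b=47$, which coincide with the tangent dimensions $\dim T_{[M]}\skss_0=74-\codim$ from Table~\ref{tTangentS}; consequently $[M_a]$ and $[M_b]$ are \emph{smooth} points of $\skss_0$. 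Second, by the last column of Table~\ref{tPfaffZero} the types $(a),(b),(c)$ are stable and $(d),(e),(f)$ are not; as the stable locus is open, the strictly semistable locus is closed in $\skss$, so $\overline{\Sigma_d}$, $\overline{\Sigma_e}$, $\overline{\Sigma_f}$ contain no stable point. Only the incomparable pairs $\{a,b\}$, $\{a,d\}$, $\{c,d\}$ then remain, in both directions: $\Sigma_a\subseteq\overline{\Sigma_d}$ and $\Sigma_c\subseteq\overline{\Sigma_d}$ are excluded by stability, $\Sigma_b\subseteq\overline{\Sigma_a}$ by $47>46$, and $\Sigma_d\subseteq\overline{\Sigma_c}$ by the computed inequality $d_d\ge d_c$. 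For $\Sigma_a\subseteq\overline{\Sigma_b}$ I use that a smooth point lies on a unique component: the unique component of $\skss_0$ through $[M_a]$ is the $46$-dimensional $\overline{\Sigma_a}$, so any irreducible closed subset through $[M_a]$ lies in $\overline{\Sigma_a}$; were $[M_a]\in\overline{\Sigma_b}$ we would get $\overline{\Sigma_b}\subseteq\overline{\Sigma_a}$, impossible as $47>46$. Finally $[M_d]$ is also smooth, lying on the unique component $\overline{\Sigma_b}$ (its tangent space is $47$-dimensional and $\Sigma_d\subseteq\overline{\Sigma_b}$), so $[M_d]\in\overline{\Sigma_a}$ would force $\overline{\Sigma_a}\subseteq\overline{\Sigma_b}$, contradicting the previous line.

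The step I expect to be the main obstacle is the pair $\{a,b\}$. Both strata are stable, so the stability dichotomy says nothing, and they have \emph{different} dimensions $46<47$, so the plain dimension inequality does not forbid $\Sigma_a\subseteq\overline{\Sigma_b}$; the exclusion really rests on the exact equality $d_a=\dim T_{[M_a]}\skss_0$, i.e.\ on $[M_a]$ being a smooth point sitting on its own lower-dimensional component of $\skss_0$. A related pitfall is that the family of rank-$2$ loci $Y$ over an orbit closure need not be flat, so invariants such as the arithmetic genus or Hilbert polynomial of $Y$ cannot be transported across specialisations without care; channelling the whole of (iii) through orbit dimensions, stability, and smoothness is precisely what sidesteps this.
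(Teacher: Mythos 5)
Your construction of the arrows (your part (ii)) is essentially the paper's proof: the paper likewise exhibits, for each arrow, an explicit one-parameter family of normal forms (for $(b)\to(d)$ and $(c)\to(e)$ it first conjugates by a $t$-dependent element of $G$ so that entries of the form $l_i t^2$ appear in an off-diagonal block, and then identifies the limit by its rank-$2$ locus, exactly as you propose). Where you genuinely diverge is part (iii). The paper's exclusion argument is geometric and uniform: the rank-$\le 2$ locus of a specialisation of $M$ must contain the flat limit of the rank-$\le 2$ loci, and one then compares the degree-$2$ curves of Table~\ref{tPfaffZero} (a connected degree-$2$ curve cannot be a subscheme of two skew lines and vice versa; a plane conic degenerates only to plane double lines, which lie on no smooth quadric). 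Your route through orbit dimensions, openness of the stable locus, and smoothness of $\skss_0$ along the strata of types $(a)$, $(b)$, $(d)$ is independent of any analysis of the schemes $Y$, and it buys exactly what you say it buys: it sidesteps all flatness and Hilbert-polynomial bookkeeping for the family of rank-$2$ loci. The smoothness arguments at $[M_a]$ and $[M_d]$ (a smooth point of $\skss_0$ lies on a unique component, whose dimension equals that of the tangent space from Table~\ref{tTangentS}) are correct and are the right tool for the pairs $\{a,b\}$ and $\{a,d\}$, which neither the plain dimension count nor stability can separate; the dimensions $d_a=46$, $d_b=47$ agree with Proposition~\ref{pGeomskss}.

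The one soft spot is the pair $\{c,d\}$. You exclude $\Sigma_d\subseteq\overline{\Sigma_c}$ by invoking ``the computed inequality $d_d\ge d_c$'', but unlike $d_a$ and $d_b$, the number $d_d$ is computed nowhere in the paper and nothing you have established bounds it from below; if the stabiliser of the type-$(d)$ normal form in $\GL_5\times\GL_6(\C)$ turned out to be large enough that $d_d<45$, this case would remain open. Fortunately you do not need the computation: the relation $\Sigma_d\subseteq\overline{\Sigma_c}$ combined with the already-established arrow $(a)\to(c)$ would give $\Sigma_d\subseteq\overline{\Sigma_c}\subseteq\overline{\Sigma_a}$, i.e.\ precisely the relation $(a)\to(d)$ that your smoothness argument at $[M_d]$ has just excluded. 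Replacing the unverified dimension claim by this transitivity observation closes the gap entirely within your own framework.
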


\begin{proof}
We deal with the closure relation symbolised by each arrow separately.

\medskip

\noindent \fbox{\textbf{$(b) \to (c)$}} Notice that matrices of type $(b)$ are conjugate to matrices of the form
\[
\left(
\begin{smallmatrix}
      0&{l}_{0}&{l}_{1}\\
      {-{l}_{0}}&0&{l}_{2}\\
      {-{l}_{1}}&{-{l}_{2}}&0\\
      &&&0&{l}_{4}&{l}_{2}\\
      &&&{-{l}_{4}}&0&{l}_{3}\\
      &&&{-{l}_{2}}&{-{l}_{3}}&0
\end{smallmatrix}
\right)
\]
Here we use that $\Lambda^2 \C^3 \simeq (\C^3)^{\vee}$ as $\mathrm{GL}_3 \, \C$-representation, whence passing to a conjugate matrix we can replace the entries of the bottom right $3\times 3$ skew matrix by any invertible linear combination of them. Now specialising $l_4 \to l_1$, we get a matrix of type $(c)$.

\medskip

\noindent \fbox{\textbf{$(d) \to (e)$}} This is obvious letting $l_4\to 0$. 

\medskip

\noindent \fbox{\textbf{$(e) \to (f)$}} This is obvious letting $l_3\to 0$. 

\medskip

\noindent \fbox{\textbf{$(a) \to (c)$}} Consider the family of skew $6\times 6$ matrices 
\[
M_t = \begin{pmatrix}
     &{l}_{3}&&0&{l}_{0}&{l}_{1}\\
     {-{l}_{3}}&&&{-{l}_{0}}&0&{l}_{2}\\
     &&&{-{l}_{1}}&{-{l}_{2}}&0\\
     0&{l}_{0}&{l}_{1}&&{l}_{3}+{l}_{4}t&\\
     {-{l}_{0}}&0&{l}_{2}&-{l}_{3}-{l}_{4}t&&\\
     {-{l}_{1}}&{-{l}_{2}}&0&&&\end{pmatrix}
\]
One has $\Pf (M_t)=0$ and the rank $2$ locus with its reduced structure is defined by the ideal: 
\[
\left({{l}_{1}},{{l}_{2}},{l}_{0}^{2}-{l}_{
     3}^{2}-{l}_{3}{l}_{4}t\right)
\]
For $t\neq 0$ this defines a smooth conic, while for $t = 0$ it defines two distinct intersecting lines. Therefore $M_t$ is of type $(a)$ for $t\not=0$ and of type $(c)$ for $t=0$.
\medskip

\noindent \fbox{\textbf{$(b) \to (d)$}} Consider the family of skew $6\times 6$ matrices 
\[
M_t = \begin{pmatrix}
      &{l}_{3}t^{2}&{l}_{4}t^{2}&0&{l}_{0}&{l}_{1}\\
     {-{l}_{3}t^{2}}&&&{-{l}_{0}}&0&{l}_{2}\\
     {-{l}_{4}t^{2}}&&&{-{l}_{1}}&{-{l}_{2}}&0\\
     0&{l}_{0}&{l}_{1}&&{l}_{3}&{l}_{4}\\
     {-{l}_{0}}&0&{l}_{2}&{-{l}_{3}}&&\\
     {-{l}_{1}}&{-{l}_{2}}&0&{-{l}_{4}}&&\end{pmatrix}
\]
One can check that $\Pf (M_t)=0$ and the rank $2$ locus with its reduced scheme structure is
defined by
\[
	(l_2,l_1+t l_4,l_0+tl_3) \cap (l_2,l_1-t l_4,l_0-t l_3)
\]
For $t\neq 0$ this defines two skew lines and hence $M_t$ is of type $(b)$ for $t\not=0$. Also $M_0$ is of type $(d)$. 

\medskip

\noindent \fbox{\textbf{$(c) \to (e)$}} Consider the family of skew $6\times 6$ matrices 
\[
M_t = \begin{pmatrix}
      &{l}_{3}t^{2}&&0&{l}_{0}&{l}_{1}\\
      {-{l}_{3}t^{2}}&&&{-{l}_{0}}&0&{l}_{2}\\
      &&&{-{l}_{1}}&{-{l}_{2}}&0\\
      0&{l}_{0}&{l}_{1}&&{l}_{3}&\\
      {-{l}_{0}}&0&{l}_{2}&{-{l}_{3}}&&\\
      {-{l}_{1}}&{-{l}_{2}}&0&&&\end{pmatrix}
\]
One can check that $\Pf (M_t)=0$ and the rank $2$ locus with its reduced structure is: 
\[
\left({l}_{1},{l}_{2},{l}_{0}^{2}-{l}_{3}^{2
      }t^{2}\right)
\]
For $t\neq 0$ this defines two distinct, but intersecting lines and hence $M_t$ is of type $(c)$. Moreover $M_0$ is of type $(e)$. 

\noindent
All computations necessary for the above are done in \cite[deformations6x6.m2]{BB-M2}.

\medskip

\noindent
Now notice that the rank $\le 2$ locus of a specialization of $M$ must contain the specialization of 
the rank $\le 2$ locus of $M$. This shows that only those specialisations depicted in the diagram can exist. For example, neither can two skew lines be deformed into a subscheme of a smooth conic nor a smooth conic into a subscheme of two skew lines. Also notice that $(a)$ does not specialise to $(d)$ since a plane conics can only specialise to plane double lines, which are not contained in any smooth quadric. 
\end{proof}

\begin{definition}\label{dAB}
We denote by $\mathcal{A}$ and $\mathcal{B}$ the closures of the loci of matrices $[M]$ of type $(a)$ and $(b)$ in $\skss_0$. 
\end{definition}

\begin{proposition}\label{pGeomskss}
Both $\mathcal{A}$ and $\mathcal{B}$ are irreducible and $\skss_0$ is their union. Moreover, 
\[
\codim (\mathcal{A} \subset \skss) = 28, 
\quad \codim (\mathcal{B} \subset \skss) = 27,
\quad \codim (\mathcal{A} \cap \mathcal{B} \subset \skss) = 29.
\]
\end{proposition}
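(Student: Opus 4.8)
The plan is to treat the structural assertions first and the intersection codimension last, where the real difficulty lies. For irreducibility and the decomposition, note that each of the strata $(a)$ and $(b)$ is the image of a morphism $\GLsix\times U_X\to\skss$, $(B,l)\mapsto[B^{t}M_X(l)B]$, with $M_X(l)$ the normal form of Table \ref{tPfaffZero} and $U_X$ the irreducible variety of tuples of independent linear forms appearing in it; as the image of an irreducible variety, each stratum, and hence its closure $\mathcal{A}$ or $\mathcal{B}$, is irreducible. From the closure relations established above, $\mathcal{A}=\overline{(a)}$ is the union of the strata $(a),(c),(e),(f)$ and $\mathcal{B}=\overline{(b)}$ the union of $(b),(c),(d),(e),(f)$, so $\mathcal{A}\cup\mathcal{B}$ meets every stratum and equals $\skss_0$. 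Since $(a)\not\subseteq\mathcal{B}$ and $(b)\not\subseteq\mathcal{A}$, neither contains the other, and these are the two irreducible components.

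For the codimensions of $\mathcal{A}$ and $\mathcal{B}$ I would argue at a generic point of the respective open stratum. At $[M]$ in the stratum $(a)$, which is disjoint from $\mathcal{B}$, one has $\skss_0=\mathcal{A}$ locally, so it suffices to compute $\dim_{[M]}\skss_0$. Row $(a)$ of Table \ref{tTangentS} records tangent-cone ideal $(0)$, i.e.\ the tangent cone fills the whole tangent space; hence $\skss_0$ is smooth at $[M]$ and, by Proposition \ref{pTangentS}, of codimension $28$. The identical argument at a point of the stratum $(b)$ --- tangent-cone ideal again $(0)$, tangent space of codimension $27$ --- gives $\codim(\mathcal{B}\subset\skss)=27$.

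For the intersection I would first observe that the strata common to $\mathcal{A}$ and $\mathcal{B}$ are exactly $(c),(e),(f)$, so $\mathcal{A}\cap\mathcal{B}=\overline{(c)}$, irreducible with generic point in $(c)$. Fix such $[M]$ and set $\ell_1=a_{124}-a_{354}$, $\ell_2=a_{024}-a_{344}$, $\ell_3=a_{054}$, linearly independent on $T_{[M]}(\skss_0)$. By Proposition \ref{pTangentCone1} the tangent cone of $\skss_0=\mathcal{A}\cup\mathcal{B}$ is $V(\ell_1,\ell_2)\cup V(\ell_3)$; matching the codimensions just found identifies $V(\ell_1,\ell_2)=TC_{[M]}\mathcal{A}$ and $V(\ell_3)=TC_{[M]}\mathcal{B}$, both linear, so each branch is smooth at $[M]$ with these as tangent spaces. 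Hence
\[
T_{[M]}(\mathcal{A}\cap\mathcal{B})\subseteq T_{[M]}\mathcal{A}\cap T_{[M]}\mathcal{B}=V(\ell_1,\ell_2,\ell_3),
\]
of codimension $3$ in $T_{[M]}(\skss_0)$, i.e.\ $29$ in $\skss$, giving the bound $\codim(\mathcal{A}\cap\mathcal{B}\subset\skss)\ge 29$.

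The hard part is the matching lower bound $\dim\overline{(c)}\ge\dim\skss-29$. The tangent-cone data only control the quadratic part of the local ideal, so they bound the dimension of the intersection from above but not from below; moreover $\skss_0$ is singular along $(c)$, so one cannot read the dimension off a tangent space of $\skss_0$. Instead I would compute $\dim\overline{(c)}=\dim(\text{stratum }(c))$ directly: by Theorem \ref{tGeometryM0} the stratum $(c)$ is a single orbit for $\GLsix$ combined with the $\mathrm{GL}_5$ changing the linear forms, hence smooth, and its dimension equals that of its tangent space $\im(d\Phi_{(c)})$ at $[M]$. Writing $M=\sum_k x_k C_k$ this is
\[
\im(d\Phi_{(c)})=\bigl(\{X^{t}M+MX:X\in\mathfrak{gl}_6\}+\{M(\dot l)\}\bigr)\big/\,\C\!\cdot\! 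M .
\]
The delicate summand is the first: unlike for a single skew form it is governed by the \emph{simultaneous} isometry algebra $\{X:X^{t}C_k+C_kX=0\ \text{for all }k\}$ of the coefficient matrices, which must be combined with the $20$-dimensional space of linear-form variations and the overlap divided out. Carrying out this computation, as in \cite[Table2.m2]{BB-M2}, should give $\im(d\Phi_{(c)})$ of codimension exactly $29$ and hence $\codim(\mathcal{A}\cap\mathcal{B}\subset\skss)=29$.
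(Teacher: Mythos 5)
Your overall architecture is sound, and your treatment of the hardest point --- the codimension of $\mathcal{A}\cap\mathcal{B}=\overline{(c)}$ via the dimension of the $\GL_6(\C)\times\GL_5(\C)$-orbit of a type $(c)$ normal form --- is precisely the method the paper uses; indeed the paper's entire proof consists of applying that orbit--stabiliser dimension count to representatives of the relevant types. Your handling of irreducibility and of $\skss_0=\mathcal{A}\cup\mathcal{B}$ via the closure relations also matches the paper.

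There is, however, a genuine gap in your derivation of $\codim(\mathcal{A}\subset\skss)=28$ and $\codim(\mathcal{B}\subset\skss)=27$. You read off from Table \ref{tTangentS} that the tangent cone ideal at a type $(a)$ point is $(0)$ and conclude smoothness. But at this stage only Proposition \ref{pTangentCone1} is available, and it identifies merely the \emph{degree $\le 2$ part} of the tangent cone ideal; the statement that the third column of the table is the full ideal is Corollary \ref{cTangentCone}, whose proof uses Proposition \ref{pGeomskss} --- so as written your argument is circular. The weaker statement does not suffice: vanishing of the quadratic part of a tangent cone ideal does not force the tangent cone to fill the tangent space (consider $y^3=x^4$ at the origin), so all you actually obtain from Proposition \ref{pTangentS} is $\codim\mathcal{A}\ge 28$ and $\codim\mathcal{B}\ge 27$. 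You are clearly aware of this one-sidedness --- you invoke it yourself when you decline to deduce $\dim\overline{(c)}$ from tangent cone data --- but you do not apply the same caution to $\mathcal{A}$ and $\mathcal{B}$. The fix is the tool you already deploy for $(c)$: compute the orbit dimension of a type $(a)$ (resp.\ $(b)$) representative via its stabiliser. A smaller quibble: the identification $V(\ell_1,\ell_2)=TC_{[M]}\mathcal{A}$ and $V(\ell_3)=TC_{[M]}\mathcal{B}$ is not forced by codimension matching alone (a codimension $28$ component of $TC_{[M]}\mathcal{A}$ could a priori lie inside $V(\ell_3)$), but that step becomes superfluous once the orbit computation for $(c)$ gives $\dim\overline{(c)}$ directly.
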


\begin{proof}
That $\mathcal{A}$ and $\mathcal{B}$ are irreducible is clear. Then one can compute the codimension by noticing that matrices of type $(a)$ and $(b)$ form one orbit under the action of $\mathrm{GL}_6 (\C)\times \mathrm{GL}_5 (\C )$ (the latter acting by changing coordinates $x_0, \dots , x_4$). Computing the dimension of the stabiliser of a representative in $(a)$ or $(b)$ yields the result. 
\end{proof}

\begin{corollary}\label{cTangentCone}
Let $[M]$ be a point in $\skss_0$. Then the ideal $I$ of the tangent cone $TC_{[M]} (\skss_0) \subset T_{[M]} (\skss_0)$ is listed in the third column of Table \ref{tTangentS}. 
\end{corollary}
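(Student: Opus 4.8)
The plan is to bootstrap from the degree-two information of Proposition~\ref{pTangentCone1} to the full tangent-cone ideal $I$, using the decomposition $\skss_0=\mathcal{A}\cup\mathcal{B}$ of Proposition~\ref{pGeomskss} together with the stratification established above; write $J$ for the ideal listed in the third column of Table~\ref{tTangentS}. The six types split according to whether $[M]$ lies on one or on both components.

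For types $(a)$, $(b)$, $(d)$, where $J=(0)$, I would show that $[M]$ is a \emph{smooth} point of $\skss_0$. By the stratification a type-$(a)$ matrix lies in $\mathcal{A}$ but not in $\mathcal{B}$, while types $(b)$ and $(d)$ lie in $\mathcal{B}$ but not in $\mathcal{A}$ (using that $(a)$ does not specialise to $(d)$), so near $[M]$ the scheme $\skss_0$ is a single irreducible component. Comparing $\codim T_{[M]}(\skss_0)$ from Proposition~\ref{pTangentS} ($28$ for $(a)$, and $27$ for $(b)$ and $(d)$) with $\codim\mathcal{A}=28$ and $\codim\mathcal{B}=27$ from Proposition~\ref{pGeomskss}, the Zariski tangent space has exactly the local dimension of $\skss_0$; hence $[M]$ is smooth, the tangent cone equals the tangent space, and $I=(0)=J$.

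For types $(c)$, $(e)$, $(f)$ one has $[M]\in\mathcal{A}\cap\mathcal{B}$, and I would use that the tangent cone of a union equals, as a set, the union of the tangent cones, $TC_{[M]}(\skss_0)=TC_{[M]}(\mathcal{A})\cup TC_{[M]}(\mathcal{B})$. The ideal $J$ is a product of two ideals in cases $(c)$, $(e)$ and an intersection of two $2\times 2$-minor ideals in case $(f)$, so $V(J)$ splits into two pieces; their codimensions, computed directly --- one and two linear conditions in $(c)$, $(e)$, and the determinantal codimensions $(3-1)(4-1)=6$ and $(6-1)(2-1)=5$ for $N_1\mid N_2$ and $\binom{N_1}{N_2}$ in $(f)$ --- reproduce $\codim\mathcal{A}=28$ and $\codim\mathcal{B}=27$ exactly. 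In cases $(c)$, $(e)$, where $J$ is generated in degree $\le 2$, Proposition~\ref{pTangentCone1} ($I_2=J_2$) gives $J\subseteq I$ and hence $V(I)\subseteq V(J)$; since $TC_{[M]}(\mathcal{B})$ is equidimensional of dimension $\dim\mathcal{B}$ it must equal the larger, codimension-$27$ piece, and it remains to match the smaller piece with $TC_{[M]}(\mathcal{A})$.

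The last and hardest step is to promote this to the equality of ideals $I=J$. The crux is to rule out that the lower-dimensional component $TC_{[M]}(\mathcal{A})$ is absorbed into $TC_{[M]}(\mathcal{B})$, which is precisely where the structure of $J$ enters. In cases $(c)$, $(e)$ the two factors are cut out by \emph{independent} linear forms, so the two pieces of $V(J)$ are distinct linear spaces meeting transversally, the product ideal equals their intersection and is radical, and a separate computation of $TC_{[M]}(\mathcal{A})$ (via its own degree-two approximation) identifies it with the smaller piece; then $V(I)=V(J)$ and, by radicality, $J\subseteq I\subseteq\sqrt{I}=\sqrt{J}=J$. Case $(f)$ is genuinely harder, since both components of the tangent cone are singular determinantal cones rather than linear spaces, so no product-equals-intersection shortcut is available; here I would verify directly, as in the earlier propositions via \cite{BB-M2}, that the higher-degree generators of the intersection lie in $I$ and that this intersection is radical of the expected dimension, whence it coincides with the initial ideal of $\skss_0$.
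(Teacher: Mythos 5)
Your treatment of types $(a)$, $(b)$, $(d)$ is exactly the paper's argument (smoothness from comparing $\codim T_{[M]}(\skss_0)$ with $\codim\mathcal{A}$, resp.\ $\codim\mathcal{B}$), and for types $(c)$, $(e)$ your strategy --- decompose $TC_{[M]}(\skss_0)=TC_{[M]}(\mathcal{A})\cup TC_{[M]}(\mathcal{B})$, match equidimensional pieces against the two components of $V(J)$ of codimensions $28$ and $27$ --- is also the paper's. You are in fact more explicit than the paper on two points it passes over quickly: ruling out that the codimension-$28$ piece of the tangent cone is absorbed into the codimension-$27$ component, and upgrading set-theoretic equality to the equality of ideals (via radicality of $(x,y)\cdot(z)=(x,y)\cap(z)$). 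That extra care is to your credit, but note that your proposed fix for the absorption issue --- ``a separate computation of $TC_{[M]}(\mathcal{A})$ via its own degree-two approximation'' --- is not something you carry out, and it is not obviously available: one would need local equations for $\mathcal{A}$ alone near $[M]$, which neither you nor the paper produces.

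The genuine gap is case $(f)$, which you declare ``genuinely harder'' and defer entirely to an unperformed verification. The paper does not treat $(f)$ differently from $(c)$ and $(e)$: the point you miss is that for the \emph{set-theoretic} containment $TC_{[M]}(\skss_0)\subseteq V(J)$ one only needs the degree-two part $J_2$, and Proposition~\ref{pTangentCone1} already gives $I_2=J_2$ in all cases, including $(f)$. The computational input is then that these quadrics cut out, set-theoretically, the union of the two irreducible reduced determinantal loci of codimensions $28$ and $27$ (checked in \cite{BB-M2}), after which the same equidimensionality count as in $(c)$, $(e)$ applies verbatim. Your worry about higher-degree generators of the intersection ideal $\mathrm{minors}_{2\times2}(N_1|N_2)\cap\mathrm{minors}_{2\times2}\bigl(\begin{smallmatrix}N_1\\ N_2\end{smallmatrix}\bigr)$ is relevant only for the ideal-theoretic refinement $J\subseteq I$, not for identifying the tangent cone as a set; conflating the two is what makes case $(f)$ look out of reach in your write-up. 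As it stands, your proposal proves the corollary for types $(a)$--$(e)$ modulo the deferred identification of $TC_{[M]}(\mathcal{A})$, and does not prove it for type $(f)$.
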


\begin{proof}
The tangent space of $\mathcal{A}$ in a point of type $(a)$ is of codimension $28$ which is also the codimension of $\mathcal{A}$, so $\mathcal{A}$ is smooth in these points and the tangent cone has ideal $(0)$. The same argument also explains the entries in the third column for points of type $(b)$ and $(d)$. 

Points $[M]$ of type $(c)$, $(e)$ and $(f)$ lie in the intersection of $\mathcal{A}$ and $\mathcal{B}$. We have seen in Proposition \ref{pTangentCone1} that the quadrics listed in the third column of Table \ref{tTangentS} vanish on the tangent cone  $TC_{[M]} (\skss_0) \subset T_{[M]} (\skss )$. In all three cases the quadrics listed in the third column cut out the union of two irreducible reduced varieties of codimension $28$ and $27$ in $T_{[M]}(\skss)$. Therefore this union must be equal to 
the tangent cone at these points. 
\end{proof}

\begin{lemma}\label{lFirstNonZero}
Let $p=([M], [F])$ be a point in $\wskss$, and let 
\[
j =( [M+ \epsilon M_1 + \dots + \epsilon^n M_n], [F+ \epsilon F_1 + \dots + \epsilon^n F_n])
\]
be a jet centred at $p$ and contained in $\wskss$. Suppose that
\[
\Pf ( M+ \epsilon M_1 + \dots + \epsilon^n M_n ) =\epsilon^n G
\]
with $G\neq 0$. Then $F$ and $G$ are nonzero scalar multiples of each other. 
\end{lemma}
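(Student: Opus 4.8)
The plan is to use only that the jet lies in $\mathcal{T}$ (which follows from $\wskss \subseteq \mathcal{T}$) together with the nilpotence of $\epsilon$. Recall that $\mathcal{T}$ is cut out inside $\skss \times \cubics$ by the $2\times 2$ minors of the $2\times 35$ matrix whose two rows are the coefficient vectors of $\Pf(M)$ and of $F$ with respect to a fixed monomial basis of the degree-$3$ forms in $x_0,\dots,x_4$. Writing $A = \C[\epsilon]/(\epsilon^{n+1})$, I would pull these minor equations back along the jet $T_n \to \mathcal{T}$: since the jet is contained in $\mathcal{T}$, every such minor must vanish in $A$.

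Concretely, set $M(\epsilon) = M + \epsilon M_1 + \dots + \epsilon^n M_n$ and $F(\epsilon) = F + \epsilon F_1 + \dots + \epsilon^n F_n$. By hypothesis the coefficient vector of $\Pf(M(\epsilon))$ is $\epsilon^n \vec{g}$, where $\vec{g} = (g_i)$ is the coefficient vector of $G$, while the coefficient vector of $F(\epsilon)$ is $\vec{f}(\epsilon) = (f_i(\epsilon))$ with constant term the coefficient vector $\vec{f} = (f_i)$ of $F$. The crux is the computation that the $(i,j)$ minor equals
\[
(\epsilon^n g_i)\,f_j(\epsilon) - (\epsilon^n g_j)\,f_i(\epsilon) = \epsilon^n\bigl(g_i f_j - g_j f_i\bigr),
\]
where the positive-degree contributions of $f_i(\epsilon)$ and $f_j(\epsilon)$ are killed because $\epsilon^{n+1} = 0$ in $A$. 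Since $g_i f_j - g_j f_i \in \C$ and $\epsilon^n c = 0$ in $A$ forces $c = 0$, the vanishing of every minor yields $g_i f_j - g_j f_i = 0$ for all $i,j$. Hence $\vec{g}$ and $\vec{f}$ are proportional, i.e. $G = \lambda F$ for some $\lambda \in \C$; as $G \neq 0$ by hypothesis and $F \neq 0$ since $[F]\in\cubics$, we conclude $\lambda \in \C^{\times}$, which is the claim.

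I expect the obstacles here to be bookkeeping rather than conceptual. One must be careful that the minor equations are pulled back correctly in spite of the projective (homogeneous) coordinates on $\skss$ and $\cubics$; this is harmless because the minors are bihomogeneous, so their vanishing is independent of any local trivialisation and one may simply evaluate the bihomogeneous minor functions on the jet. The one step deserving genuine attention is the nilpotent algebra: tracking that all cross terms between $\epsilon^n \vec{g}$ and the positive-degree part of $F(\epsilon)$ vanish by $\epsilon^{n+1}=0$, and that $\epsilon^n$ multiplying a scalar is zero in $A$ if and only if that scalar is zero. It is worth remarking that the hypothesis $p\in\wskss$ enters only through $\wskss\subseteq\mathcal{T}$; the finer graph-closure structure of $\wskss$ plays no role in this particular statement.
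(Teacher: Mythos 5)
Your proposal is correct and follows essentially the same route as the paper: both arguments use only that the jet satisfies the defining $2\times 2$ minor equations of $\mathcal{T}$ over $\C[\epsilon]/(\epsilon^{n+1})$, together with the nilpotence of $\epsilon$, to force proportionality of $G$ and $F$. Your version merely makes explicit the minor computation that the paper summarises in one sentence.
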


\begin{proof}
The defining equations of $\mathcal{T}$ imply that $\Pf ( M+ \epsilon M_1 + \dots + \epsilon^n M_n )$ is a multiple of $F+ \epsilon F_1 + \dots + \epsilon^n F_n$ with a factor of proportionality in $\C[t]/(t^{n+1})$. Therefore $G$ must be proportional to $F$ with a nonzero scalar in $\C$.
\end{proof}

We now start showing one direction of Theorem \ref{tClosure}.

\begin{proposition}\label{pOneDirection}
Let $p= ([M],[F])$ be a point in $\wskss_0$ and let $\overline{Y}\subset X$ be the scheme defined by the $4 \times 4$ Pfaffians of $M$ and the cubic polynomial $F$.  
Then $\overline{Y}$ contains a curve of degree $2$. 
\end{proposition}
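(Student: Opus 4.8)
The plan is to specialise a one-parameter family of honest Pfaffian representations to $p=([M],[F])$ and to read off from its leading coefficient that $F$ vanishes on the degree $2$ curve. Since $p\in\wskss_0=\wskss\cap\mathcal{T}_0$ and $\wskss$ is by definition the closure of $\pi_1^{-1}(\skss-\skss_0)$, the curve selection lemma furnishes an algebraic arc in $\wskss$ through $p$ whose generic point lies in $\pi_1^{-1}(\skss-\skss_0)$. Writing its matrix coordinate as $M_\epsilon=M+\epsilon M_1+\epsilon^2M_2+\cdots$, the condition on the generic point says $\Pf(M_\epsilon)\neq 0$, so $\Pf(M_\epsilon)=\epsilon^n G+\mathrm{h.o.t.}$ with $G\neq0$ for some $n\ge1$. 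By Lemma \ref{lFirstNonZero}, $G$ is a nonzero scalar multiple of $F$, so $V(F)=V(G)=X$. It therefore suffices to show that $G$ vanishes on the degree $2$ curve attached to $Y$ in the sense of Remark \ref{rDegree2Curve}; then $\overline Y=Y\cap X$ contains it.

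The computational engine is the identity $\partial\Pf/\partial M_{ij}=\pm\Pf(M_{\hat\imath\hat\jmath})$ for $i<j$: the partial derivatives of the Pfaffian are, up to sign, precisely the $4\times 4$ sub-Pfaffians, i.e. the generators of the ideal $I(Y)$ of the rank-$\le 2$ locus. Hence the first-order variation
\[
\tfrac12\,\langle\Pfad(M),M_1\rangle=\sum_{i<j}\pm\,\Pf(M_{\hat\imath\hat\jmath})\,(M_1)_{ij}
\]
is a combination of these generators with linear coefficients and so lies in $I(Y)_3$. In particular, if the arc leaves $\skss_0$ already at first order, i.e. $n=1$, then $G$ equals this variation, so $G\in I(Y)_3$; since $F$ is proportional to $G$ this forces the ideal $(4\times 4\ \text{Pfaffians of }M,\,F)$ to equal $I(Y)$, whence $\overline Y=Y$ and we are done outright.

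In general the arc may leave $\skss_0$ only to higher order, and then the higher coefficients genuinely contribute: differentiating through the chain rule $\tfrac{d}{d\epsilon}\Pf(M_\epsilon)=\tfrac12\langle\Pfad(M_\epsilon),M_\epsilon'\rangle$ and comparing coefficients of $\epsilon^{n-1}$ gives
\[
n\,G=\tfrac12\sum_{a=0}^{n-1}(n-a)\,\langle P_a,M_{n-a}\rangle ,
\]
where $P_a$ is the coefficient of $\epsilon^a$ in $\Pfad(M_\epsilon)$. The term $a=0$ lies in $I(Y)_3$ as above, but the terms with $a\ge1$ involve the higher coefficients of the Pfaffian adjugate, whose entries need not vanish on $Y$; indeed for type $(f)$ these higher terms enlarge the locus on which $G$ is forced to vanish from $Y$ to its reduced line, exactly as predicted by Remark \ref{rDegree2Curve}. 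Controlling these higher-order contributions is the main obstacle. I would dispose of it by exploiting the finiteness built into Theorem \ref{tGeometryM0}: up to the action of $G\times\GL_5(\C)$ there are only the six normal forms $(a)$–$(f)$, so by equivariance it suffices to treat one representative of each. On each representative Hensel lifting (Proposition \ref{pHensel}), together with the explicit degree $2$ approximations of the tangent cones recorded in Corollary \ref{cTangentCone} and Table \ref{tTangentS}, reduces the question to a finite computation with $2$-jets that identifies all admissible leading terms $G$ and checks directly that each vanishes on the relevant degree $2$ curve. Assembling the six cases yields that $G$, hence $F$, vanishes on that curve, so $\overline Y$ contains it.
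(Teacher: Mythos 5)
Your overall strategy --- take a formal arc through $p$ that generically leaves $\mathcal{T}_0$, write $\Pf(M_\epsilon)=\epsilon^nG+\cdots$, identify $G$ with $F$ via Lemma \ref{lFirstNonZero}, and show that $G$ vanishes on the degree $2$ curve --- is exactly the paper's underlying idea, and your treatment of the case $n=1$ via the Pfaffian Laplace expansion agrees with the paper's first case. The gap is in the step you yourself flag as ``the main obstacle'': controlling the leading coefficient when the arc leaves $\mathcal{T}_0$ only at higher order. Your proposed resolution --- Hensel lifting plus ``a finite computation with $2$-jets that identifies all admissible leading terms $G$'' --- cannot work as stated. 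Proposition \ref{pHensel} characterises which tangent vectors extend to $2$-jets lying on a variety; it gives no information about an arc whose $2$-jet truncation lies entirely in $\mathcal{T}_0$ but which escapes only at order $n\ge 3$. For such an arc the leading term $G$ depends on $M_1,\dots,M_n$ and is simply not determined by any $2$-jet datum, so no finite $2$-jet computation can enumerate the possible $G$'s. Note also that the paper does not analyse an arbitrary given arc in cases (a)--(e): it \emph{constructs} an escaping jet of order at most $2$ (order $1$ for types (a), (b), (d), using smoothness of $\skss_0$ at $[M]$ from Corollary \ref{cTangentCone}; order $2$ for types (c), (e), by comparing the degree $2$ approximations of $TC_p(\mathcal{T})$ and $TC_p(\mathcal{T}_0)$, the latter ideal being $(xz,yz)$), and only then runs the finite computation over such low-order jets. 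That choice of a good jet, rather than an analysis of a given arc, is what makes the jet calculus sufficient there, and it is missing from your write-up.

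For type (f) the situation is worse: no bound on the escape order is available, and the paper abandons the jet calculus altogether. Its argument is: given the arc with $\Pf(\overline M_t)\equiv t^nG \bmod t^{n+1}$ and a point $x$ of the reduced line $\ell=V(l_0,l_1,l_2)$, one passes to a ramified cover $t=s^r$ and produces a power series $\overline x_s=x+x_1s+\cdots$ such that the $4\times4$ sub-Pfaffians of $\overline M_{s^r}(\overline x_s)$ vanish modulo $s^{rn}$; this uses that the relative rank $\le 2$ locus over $\Spec \C[t]/(t^n)$ contains an irreducible dominant curve through $x$ which acquires a section after base change. The Laplace expansion $\Pf(N)=\sum\pm l_{\alpha\beta}(N)q_{\alpha\beta}(N)$ then yields $\Pf(\overline M_{s^r}(\overline x_s))\equiv 0\bmod s^{rn+1}$, because each entry vanishes mod $s$ and each sub-Pfaffian mod $s^{rn}$; comparing with $s^{rn}G(x)$ forces $G(x)=0$, which by Remark \ref{rDegree2Curve} suffices. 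Your proposal contains no substitute for this step (and, incidentally, your claim that the higher-order terms ``enlarge the locus on which $G$ is forced to vanish from $Y$ to its reduced line'' is backwards: the reduced line is smaller than $Y$, and the point of Remark \ref{rDegree2Curve} is precisely that for type (f) only vanishing on the reduced line can be forced).
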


\begin{proof}

We divide the proof into cases, depending on whether $[M]$ is of type (a)-(f).

\medskip

\noindent \fbox{\textbf{Case 1: $[M]$ of type (a), (b) or (d)}} In these cases $\mathcal{T}_0$ is smooth in $p$ because of Corollary \ref{cTangentCone}. In such a point there must be a tangent vector to $\mathcal{T}$ that is not tangent to $\mathcal{T}_0$, or in terms of jets, a $1$-jet centred at $p$ in $\mathcal{T}$:
\[
j = ([M + \epsilon M_1], [F + \epsilon F_1])
\]
such that 
\[
\Pf (M + \epsilon M_1 ) = \Pf (M) + \epsilon G = \epsilon G
\]
with $G \neq 0$. Now $G$ is a linear combination of the $4\times 4$ sub-Pfaffians of $M$ with coefficients entries of $M_1$ by Pfaffian Laplace expansion \cite[equation (D.1) p. 116]{FP98} : hence in these cases $X=V(G)$ contains the scheme defined by the $4 \times 4$ Pfaffians of $M$.

\medskip

\noindent \fbox{\textbf{Case 2: $[M]$ of type (c) or (e)}} If the tangent space to $\mathcal{T}$ at $p$ is strictly larger than the tangent space to $\mathcal{T}_0$ at $p$, then we can argue as in Step 1, so we assume the two tangent spaces are equal in the following. Therefore the tangent cones $TC_p (\mathcal{T})$ and  $TC_p (\mathcal{T}_0)$ can be viewed as living in the same ambient space, and we have $TC_p (\mathcal{T}_0) \subsetneq TC_p (\mathcal{T})$: let $I_{\mathcal{T}}, I_{\mathcal{T}_0}$ be the ideals of $\mathcal{T}, \mathcal{T}_0$ in an affine neighbourhood of $p$ which we can assume to be the origin. Then $I_{\mathcal{T}}\subsetneq I_{\mathcal{T}_0}$. Choose a Gr\"obner basis $B$ for $I_{\mathcal{T}}$ with respect to some monomial ordering refining the order given by total degree. Let $f\in I_{\mathcal{T}_0}$ be a polynomial which is not in $I_{\mathcal{T}}$. The reduction $\bar{f}$ of this polynomial modulo the Gr\"obner basis $B$ is nonzero and the initial term of $\bar{f}$ (i.e., the lowest degree homogeneous component) is not in the ideal of the tangent cone $TC_p (\mathcal{T})$.


In cases (c), (e) the ideal of the tangent cone $TC_p (\mathcal{T}_0)$ is given by $(xz, yz)$ with suitable choice of coordinates $x, y, z$. The ideal of $TC_p (\mathcal{T})$ contains at most one quadric $(\alpha x +\beta y) z$ in the ideal $(xz, yz)$. Therefore the support of the degree $2$ approximation of $TC_p(\mathcal{T})$ is strictly bigger than the support of the degree $2$ approximation of $TC_p(\mathcal{T}_0)$. 
By Proposition \ref{pHensel} this implies that there is a $2$-jet $p+\epsilon p' + \epsilon^2 p''$ contained in $\mathcal{T}$ with $p'$ in the tangent space to $\mathcal{T}$ at $p$, which is equal to the tangent space to $\mathcal{T}_0$ at $p$, which is not contained in $\mathcal{T}_0$. Now we can check by computer algebra \cite[Table1.m2]{BB-M2} that for all $2$-jets 
\[
	j = ([M + \epsilon M_1 + \epsilon^2 M_2], [F + \epsilon F_1+ \epsilon^2 F_2]])
\]
with $M$ a point of type $(c)$ or $(e)$ and
\[
	\Pf (M + \epsilon M_1  + \epsilon^2 M_2) = \epsilon^2 G
\]
we have that $G$ vanishes on the subscheme defined by the $4\times 4$ Pfaffians of $M$ with embedded points removed.

\medskip

\noindent \fbox{\textbf{Case 3: $[M]$ of type (f)}} 
There exists a pair of formal power series
\[
 (\overline{M}_t , \overline{F}_t) =([M +  M_1 t+ M_2 t^2 + \dots ], [F + F_1 t+ F_2 t^2 + \dots ])
\]
such that 
\[
\Pf (\overline{M}_t ) \equiv t^{n} G \mod t^{n+1}
\]
with $n\in \mathbb{N}$ and $G$ a nonzero cubic polynomial. Let $x \in \C^5\backslash \{0\}$ be a point such that $M(x)$ has rank $\le 2$; for type (f) this is equivalent to requiring that $M(x)=0$. We claim that there exists a positive integer $r$ and a power series 
\[
\overline{x}_s = x + x_1s + x_2 s^2 + \dots 
\]
such that the $4\times 4$ sub-Pfaffians of 
\[
 \overline{M}_{s^r} (\overline{x}_s) 
\]
vanish modulo $s^{rn}$. Geometrically, we assume that $ (\overline{M}_t , \overline{F}_t)$ lies on $\mathcal{T}_0$ modulo $t^n$, and we claim that after a ramified covering that amounts to replacing $t$ by $s^r$, we can deform the point $x$ along with $M$ such that it still lies in the rank $\le 2$ locus modulo $t^n =s^{rn}$. Granting the claim for the moment, we can finish the proof as follows. 

On the one hand, we have 
\[
\Pf \bigl(  \overline{M}_{s^r} (\overline{x}_s)  \bigr) \equiv s^{rn} G(\overline{x}_s) \equiv s^{rn} G(x) \mod s^{rn+1}.
\]
On the other hand, for any matrix $N$
\[
\Pf \bigl(  N  \bigr) = \sum_{\alpha < \beta} \pm l_{\alpha\beta}(N) q_{\alpha\beta} (N)
\]
where $l_{\alpha\beta}(N)$ is the $(\alpha, \beta)$-entry of $N$ and $q_{\alpha\beta} (N)$ the sub-Pfaffian of the matrix obtained from $N$ by erasing rows and columns $\alpha, \beta$. Applying this to $N=\overline{M}_{s^r} (\overline{x}_s) $ we see that 
\[
\Pf \bigl(  \overline{M}_{s^r} (\overline{x}_s)  \bigr) =  \sum_{\alpha < \beta} \pm l_{\alpha\beta}\bigl(\overline{M}_{s^r} (\overline{x}_s) \bigr) q_{\alpha\beta} \bigl( \overline{M}_{s^r} (\overline{x}_s) \bigr).
\]
Since
\[
l_{\alpha\beta}\bigl(\overline{M}_{s^r} (\overline{x}_s) \bigr) \equiv 0 \mod s
\]
and
\[
q_{\alpha\beta} \bigl( \overline{M}_{s^r} (\overline{x}_s) \bigr) \equiv 0 \mod s^{rn}
\]
we get that 
\[
s^{rn} G(x) \equiv \Pf \bigl(  \overline{M}_{s^r} (\overline{x}_s)  \bigr) \equiv 0 \mod s^{rn+1}
\]
whence $G(x)=0$. Therefore $G$ vanishes on the support of the rank $\le 2$ locus of $M$, which by Remark \ref{rDegree2Curve} is enough.  

\medskip

It therefore remains to prove the above claim. Let $\mathcal{B}=\mathrm{Spec}\, \C [t]/(t^n)$. We can view $\overline{M}_t$ as a section of $\skss_0\times_{\C} \mathcal{B}$ over $\mathcal{B}$. We consider the subscheme $\mathcal{Y}$ in $
\P^4\times_{\C} \mathcal{B}$ defined by the $4\times 4$ sub-Pfaffians of $\overline{M}_t$. By our previous classification, all fibres of $\mathcal{Y}$ over $\mathcal{B}$ are curves possibly with embedded points or multiple structure. We consider $\mathcal{Y}'\subset \mathcal{Y}$ an irreducible dominant component of relative dimension $1$ over $\mathcal{B}$. Cutting down by a relative hyperplane in $\P^4 \times_{\C} \mathcal{B}$ containing $x$ we can obtain an irreducible curve $\mathcal{C}\to \mathcal{B}$ of rank $\le 2$ points of $\overline{M}_t$. After a base change $s=t^r$, the pull-back of $\mathcal{C}$ will acquire a section. 
\end{proof}

It remains to show the other direction of Theorem \ref{tClosure}.

\begin{proposition}\label{pOtherDirection}
Let $p=([M], [F])$ be a point in $\mathcal{T}_0$ such that the scheme $\overline{Y}$ defined by the $4 \times 4$ Pfaffians of $M$ and $F$ contains a curve of degree $2$. Then $p\in \wskss_0=\wskss \cap \mathcal{T}_0$.
\end{proposition}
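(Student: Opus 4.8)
The plan is to reduce the proposition to the construction of a single non-constant arc and then to build that arc from the explicit normal forms of Table \ref{tPfaffZero}. The key reformulation is that, since $\mathcal{T}_0 = \pi_1^{-1}(\skss_0)$, we have $\mathcal{T}\setminus\mathcal{T}_0 = \pi_1^{-1}(\skss - \skss_0)$ and hence $\wskss = \overline{\mathcal{T}\setminus\mathcal{T}_0}$. Thus to prove $p \in \wskss_0$ it suffices to produce a jet $([M_\bullet],[F_\bullet])$ lying in $\mathcal{T}$, centred at $p$, with $M_\bullet = M + \epsilon M_1 + \dots + \epsilon^k M_k$, along which the Pfaffian does not vanish identically. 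Indeed, if $\Pf(M_\bullet) = \epsilon^n G + O(\epsilon^{n+1})$ with $G \neq 0$, then Lemma \ref{lFirstNonZero} (whose proof uses only that the jet lies in $\mathcal{T}$) forces $G$ to be proportional to $F$; passing to the honest family $M_s = M + sM_1 + \dots + s^k M_k$, the leading term of $\Pf(M_s)$ is $s^n G$, so for small $s\neq 0$ we have $M_s \in \skss - \skss_0$ and $\bigl([M_s],[\Pf(M_s)]\bigr)\in\pi_1^{-1}(\skss - \skss_0)$ tends to $([M],[G]) = p$ as $s\to 0$. Being a limit of points of $\pi_1^{-1}(\skss - \skss_0)$, the point $p$ lies in $\wskss$, hence in $\wskss_0$. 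So the entire task is the construction of such a jet; by Lemma \ref{lFirstNonZero} the proportionality to $F$ is automatic, and the hypothesis on $\overline{Y}$ has to be exactly what guarantees existence.

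For the types $(a)$, $(b)$, $(d)$ a first-order jet already works. By Corollary \ref{cTangentCone} the scheme $\skss_0$, and hence $\mathcal{T}_0$, is smooth at $p$, so I would look for a tangent vector $(M_1,F_1)$ to $\mathcal{T}$ at $p$ that is not tangent to $\mathcal{T}_0$. Working to first order, the defining minors of $\mathcal{T}$ force $\Pf(M+\epsilon M_1) = \epsilon\, c\, F$ for some $c\in\C$; by the Pfaffian Laplace expansion \cite[(D.1)]{FP98} the left-hand side equals $\epsilon\sum_{\alpha<\beta}\pm (M_1)_{\alpha\beta}\,q_{\alpha\beta}(M)$, a combination of the quadratic $4\times4$ sub-Pfaffians with arbitrary linear coefficients. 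Hence a direction escaping $\mathcal{T}_0$ exists precisely when $F$ lies in the degree-$3$ piece $\ring_1\cdot\langle q_{\alpha\beta}(M)\rangle$ of the ideal generated by the quadric sub-Pfaffians. For these three types $Y$ is itself the degree-$2$ scheme (a smooth conic, two skew lines, or a double line on a quadric) and the hypothesis says exactly that $F$ lies in the homogeneous ideal $I_Y$ of $Y$; I would then check, as in \cite{BB-M2}, that the quadric sub-Pfaffians generate $I_Y$ through degree $3$, i.e. $\ring_1\cdot\langle q_{\alpha\beta}(M)\rangle = (I_Y)_3$, which closes these cases.

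The types $(c)$, $(e)$, $(f)$ are the genuine obstacle. Here $Y$ carries an embedded point or is the full first-order neighbourhood of a line, so by Remark \ref{rDegree2Curve} the hypothesis only constrains $F$ on the reduced degree-$2$ curve (respectively the reduced line); this is strictly weaker than $F\in (I_Y)_3$, so $F$ need not lie in $\ring_1\cdot\langle q_{\alpha\beta}(M)\rangle$ and no first-order jet can escape $\mathcal{T}_0$. This is the analytic shadow of the fact, recorded in Corollary \ref{cTangentCone}, that the tangent cone of $\skss_0$ is singular at such points, so that $\mathcal{T}$ and $\mathcal{T}_0$ are tangent to higher order. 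For $(c)$ and $(e)$ I would therefore produce a $2$-jet $M + \epsilon M_1 + \epsilon^2 M_2$ with $M_1$ tangent to $\skss_0$ (so the $\epsilon$-coefficient of the Pfaffian vanishes) but with nonzero $\epsilon^2$-coefficient: existence of such a jet in $\mathcal{T}$ is governed, via the Hensel-type Proposition \ref{pHensel}, by the degree-$2$ approximation of $TC_p(\mathcal{T})$ strictly containing that of $TC_p(\mathcal{T}_0)$, and I would verify by the computation of \cite{BB-M2} that for every $F$ vanishing on the reduced curve this second-order escape direction is available — the exact converse of Case 2 in the proof of Proposition \ref{pOneDirection}. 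For type $(f)$ the reduced locus is only a line and a $2$-jet need not suffice, so I would instead run the ramified-covering power-series argument of Case 3 of Proposition \ref{pOneDirection} in reverse, building a power-series family $\overline{M}_t$ over a covering $s\mapsto s^r = t$ whose Pfaffian first appears in some order with leading cubic vanishing exactly on the line. I expect the main difficulty to be precisely this mismatch between the reduced degree-$2$ curve furnished by the hypothesis and the non-reduced scheme $Y$ that actually controls the Pfaffian derivatives, which is what forces the passage to second-order jets and to ramified coverings and makes the computational input of \cite{BB-M2} indispensable.
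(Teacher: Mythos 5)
Your global reduction (produce a jet in $\mathcal{T}$ centred at $p$ along which $\Pf$ does not vanish identically, then invoke Lemma \ref{lFirstNonZero} and pass to the honest one-parameter family) is sound, and your treatment of types $(a)$, $(b)$, $(d)$ via the Pfaffian Laplace expansion is essentially the paper's Case 1. But for types $(c)$, $(e)$, $(f)$ the paper does not stay infinitesimal at all: for $X$ smooth it realises the degree-$2$ curve as a limit of pairs of \emph{skew lines lying on $X$ itself} (standard geometry of the Fano surface), encodes these as block-diagonal type $(b)$ matrices $M_t$ for which $([M_t],[F])\in\wskss_0$ is already known, and concludes by closedness of $\wskss$; type $(e)$ needs an explicit conjugation-and-rescaling trick to make the block-diagonal family degenerate to the non-block normal form, and singular $X$ is handled by further degenerating smooth cubics containing the same curve $Z$. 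None of this geometric input appears in your proposal, and it is exactly what carries the hard cases.

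Concretely, two of your steps are unsupported. First, for $(c)$ and $(e)$ you assert that the computation behind Case 2 of Proposition \ref{pOneDirection}, "reversed", shows that \emph{every} $F$ vanishing on the reduced curve arises as the $\epsilon^2$-leading term of a $2$-jet in $\mathcal{T}$. That computation only gives one containment; the converse amounts to surjectivity of the homogeneous quadratic map $M_1\mapsto F''\bmod (I_Y)_3$ (for $M_1\in T_{[M]}\skss_0$, $M_2$ arbitrary) onto the quotient of the degree-$3$ part of the ideal of the reduced curve by $(I_Y)_3$. The image of a quadratic map is only a constructible cone, so special $F$ could be missed; at a minimum you would need to add that $\wskss_0$ is closed and the space of admissible $F$ is an irreducible linear space, so that a dense set of $F$ suffices. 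Second, and more seriously, for type $(f)$ the quotient in question is large (for $Y$ the first-order neighbourhood of $\ell=V(l_0,l_1,l_2)$ one computes $\dim\bigl((I_\ell)_3/(I_Y)_3\bigr)=9$), so low-order jets cannot suffice, and "running the ramified-covering argument of Case 3 in reverse" is not a construction: that argument extracts a necessary condition from a \emph{given} family, whereas here you must \emph{build}, for every cubic $F$ containing $\ell$, a family of semistable skew matrices degenerating to $M$ with prescribed leading cubic. You give no mechanism for this; the paper's mechanism — pairs of skew lines on $V(F)$ specialising to the non-reduced structure, lifted to type $(b)$ matrices — is the missing idea.
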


\begin{proof}
We again split up the argument into cases according to the type of $M$. 

\medskip

\noindent \fbox{\textbf{Case 1: $[M]$ of type (a), (b), (d)}} In these cases, the subscheme $Y$ defined by the $4\times4$ sub-Pfaffians of $M$ is a pure dimensional degree $2$ curve, hence by the assumption $F$ can be written 
\[
F = \sum_{\alpha< \beta} \pm l_{\alpha \beta} q_{\alpha\beta} 
\]
where $q_{\alpha\beta} $ is the sub-Pfaffian of the matrix obtained from $M$ by erasing rows and columns $\alpha, \beta$, and the $l_{\alpha \beta}$ are some linear forms. We can now collect the $l_{\alpha\beta}$ (with appropriate signs) into an antisymmetric matrix $M_1$ such that 
\[
\Pf (M +\epsilon M_1) = \epsilon F . 
\]
This means that there exists a $1$-jet centred at $p=([M], [F])$ that is contained in $\mathcal{T}$ but not in $\mathcal{T}_0$. This can only happen if $p\in \wskss \cap \mathcal{T}_0$. 

\medskip

\noindent \fbox{\textbf{Case 2: $[M]$ of type (c), (f) and $X$ smooth}} In this case, by our assumption, the smooth cubic threefold $X=V(F)$ contains two intersecting and possibly identical lines. Since we already know that all pairs $([M'], [F'])$ with $M'$ of type $(b)$ and rank $2$ locus of $M'$ contained in $X'=V(F')$ are in $\wskss \cap \mathcal{T}_0$, it suffices to show that our given $([M], [F])$ is a limit of such $([M'], [F'])$. In our argument we take $F'=F$. By standard facts about lines on smooth cubic threefolds, each pair of such lines is the limit of a family of pairs of skew lines on the cubic. We can assume that for $t\neq0$ this family of skew lines is defined by a family of matrices of type $(b)$:
\[
\begin{pmatrix}
A_t & 0\\
0 & B_t
\end{pmatrix}.
\]
For $t=0$ we get a matrix of type $(c)$ or $(f)$ by our construction. 

\medskip

\medskip

\noindent \fbox{\textbf{Case 3: $[M]$ of type (e) and $X$ smooth}} Consider a $6 \times 6$ skew matrix of linear forms $M$ of type $(e)$, which we write as
\[
	M = \begin{pmatrix}
		0 & 2A \\
		2A & -B
		\end{pmatrix}
\]
with $A, B$ skew, and $X = V(F)$ a smooth cubic threefold containing the unique double line $\overline{Y}$ that is contained in the rank $2$ locus of $M$. Our goal is to write down a family of matrices $([M_t], [F])$ in $\wskss_0$ where $M$ is of type $(b)$ for $t\neq 0$ and $M=M_0$. 
Intuitively, the strategy will be to deform the reduced underlying scheme $\ell$ of $\overline{Y}$ in the direction of the double structure. 

The entries of $A$ define $\ell$. We claim that $M$ still has rank $2$ on the vanishing locus of the entries of $A+\epsilon B$ with $\epsilon^2 =0$: indeed, the matrix
\[
	M = \begin{pmatrix}
		0 & -2\epsilon B \\
		-2\epsilon B & -B
		\end{pmatrix}
\]
has rank $2$. Also it follows that there is a family of lines $\ell_t$ on $X$ with $\ell_0=\ell$ and tangent direction at $t=0$ defined by the vanishing of the entries of $A+\epsilon B=0$. We can also choose the family such that the general $\ell_t$ does not intersect $\ell$. 
Therefore there exists a family of skew-symmetric $3 \times 3$ matrices of linear forms defining the family of lines $\ell_t$  on $X$ such that locally we have
\[
	A_t = A + t B + \text{higher order terms}
\]
For $t\not=0$ consider the family of skew-symmetric $6 \times 6$ matrices
\[
	\begin{pmatrix}
	A& 0 \\
	0 & -A_t
	\end{pmatrix}.
\]
Conjugating with $\left( \begin{smallmatrix} 1 & -1 \\ 1 & 1 \end{smallmatrix} \right)$ we obtain
\[
	\begin{pmatrix}
	A- A_{t}& A+ A_{t}\\
	A+ A_{t} & A - A_{t}
	\end{pmatrix}.
\]
Conjugating with $\left( \begin{smallmatrix} t & 0 \\ 0 & 1 \end{smallmatrix} \right)$ gives
\[
 \begin{pmatrix}
	t^2(A-A_{t}) & t(A + A_{t}) \\
	t(A+A_{t}) & (A-A_{t})
	\end{pmatrix}.
\]
Now notice that $A-A_{t} = -t(B + \text{h.o.t.})$.
Scaling with $t^{-1}$ we obtain a family
\[
M_t =  \begin{pmatrix}
	-t^2(B + \text{h.o.t}) & A + A_{t} \\
	A+A_{t} & -(B + \text{h.o.t})
	\end{pmatrix}
\]
that by construction is conjugate to
\[
	t^{-1}\begin{pmatrix}
	A& 0 \\
	0 & -A_{t}
	\end{pmatrix}
\]
for $t \not=0$. For 
$t=0$ we have
\[
M_0 = \begin{pmatrix}
	0& 2A\\
	2 A & -B
	\end{pmatrix} = M.
\]
This accomplishes what we wanted. 

\medskip

\noindent \fbox{\textbf{Case 4: $[M]$ of type (c), (e) (f) and $X$ singular}} In this case, we are given a singular cubic threefold $X=V(F)$ containing a subscheme $Z$ which is either a pair of intersecting but distinct lines (type $(c)$), or a plane double line (type $(e)$), or a reduced line (type $(f)$). We observe that $X$ can be written as a limit of smooth cubic threefolds $X_t$ containing the same $Z$. Therefore the pair $([M], [F])$ is in the limit of a family of pairs $([M], [F_t])$ where $X_t = V(F_t)$ for $t\neq 0$ is smooth and $([M], [F_t])$ is in $\wskss \cap \mathcal{T}_0$ by Case 2 and 3. 
\end{proof}

\providecommand{\bysame}{\leavevmode\hbox to3em{\hrulefill}\thinspace}
\providecommand{\href}[2]{#2}


\begin{thebibliography}{9999999999}
\bibitem[Beau00]{Beau00}
Beauville, A., \emph{Determinantal hypersurfaces}, Mich. Math. J. \textbf{48} (1) (2000), 39--64

\bibitem[Beau02]{Beau02}
Beauville, A., \emph{Vector bundles on the cubic threefold}, Symposium in Honor of C. H. Clemens (Salt Lake City, UT, 2000) Contemp. Math., vol. \textbf{312}, Amer. Math. Soc., Providence, RI, (2002), 71--86

\bibitem[Bi02]{Bi02}
Bia\l{}ynicki-Birula, A., \emph{Quotients by Actions of Groups}, in: ``Algebraic Quotients. Torus Actions and Cohomology. The Adjoint Representation and the Adjoint Action", Encyclopaedia of Mathematical Sciences \textbf{131}, Springer-Verlag (2002)

\bibitem[BBG19-1]{BBvG19-1}
B\"ohning, Chr., Graf\, v.\, Bothmer, H.-Chr., van Garrel, M., \emph{Prelog Chow rings and degenerations}, \href{https://arxiv.org/abs/1911.08930}{arXiv:1911.08930}

\bibitem[BBG19-2]{BBvG19-2}
B\"ohning, Chr., Graf\, v.\, Bothmer, H.-Chr., van Garrel, M., \emph{Prelog Chow groups of self-products of degenerations of cubic threefolds}, accepted for publication in European Journal of Math. (2021), \href{https://arxiv.org/abs/1912.05363}{arXiv:1912.05363v3 [math.AG]}

\bibitem[BBB-M2]{BB-M2}
B\"ohning, Chr., Graf\, v.\, Bothmer, H.-Chr., Buhr, Lukas, \emph{Macaulay2 scripts for ``Moduli spaces of $6\times 6$ skew matrices of linear forms on $\P^4$ with a view towards intermediate Jacobians of cubic threefolds"}, available at \href{https://www.math.uni-hamburg.de/home/bothmer/M2/DescriptionOfClosure/}{https://www.math.uni-hamburg.de/home/bothmer/M2/DescriptionOfClosure/}

\bibitem[BB22]{BB22}
B\"ohning, Chr., Graf\, v.\, Bothmer, H.-Chr., \emph{Formats of $6\times6$ skew matrices of linear forms with vanishing Pfaffian}, (2022), available at arXiv:2210.02926

\bibitem[CM-L09]{CM-L09}
 Casalaina-Martin, S., Laza, R. \emph{The moduli space of cubic threefolds via
degenerations of the intermediate Jacobian}, 
J. reine angew. Math. \textbf{633} (2009), 29--65

\bibitem[CG72]{CG72}
Clemens, C.H., Griffiths, P.A., \emph{The intermediate Jacobian of the cubic threefold}, Ann. Math. \textbf{95} (1972), 281--356

\bibitem[Co20]{Co20}
Comaschi, G., \emph{Pfaffian representations of cubic threefolds}, preprint (2020), arXiv:2005.06593

\bibitem[Co21]{Co21}
Comaschi, G., \emph{Stable linear systems of skew-symmetric forms of generic rank $\le 4$}, Linear Algebra and its Applications \textbf{624} (2021),121--152

\bibitem[Dol03]{Dol03}
Dolgachev, I., \emph{Lectures on Invariant Theory}, London Math. Soc. Lecture Note Series \textbf{296}, Cambridge University Press (2003)

\bibitem[Dre04]{Dre04}
Dr\'ezet, J-M., \emph{Luna's slice theorem and applications}, Algebraic group actions and quotients, 39--89, Hindawi Publ. Corp., Cairo, (2004)

\bibitem[Dru00]{Dru00}
Druel, S., \emph{Espace des modules des faisceaux de rang $2$ semi-stables de classes de Chern $c_1 = 0, c_2 = 2$ et $c_3 = 0$ sur la cubique de $\P_4$}, Int. Math. Res. Not. \textbf{19} (2000) 985--1004

\bibitem[Ei80]{Ei80}
Eisenbud, D., \emph{Homological Algebra on a Complete Intersection, with an Application to Group Representations}, Transactions of the American Mathematical Society \textbf{260}(1980), 35--64

\bibitem[FP98]{FP98}
Fulton, W., Pragacz, P., \emph{Schubert varieties and degeneracy loci}, Lecture Notes in Mathematics \textbf{1689}, Springer-Verlag (1998)

\bibitem[Gri84]{Gri84}
Griffiths, P.A., \emph{Variations of Hodge structure}, in: ``Topics in Transcendental Algebraic Geometry", volume \textbf{106} of Annals of
Mathematics Studies, chapter 1. Annals of Mathematics, (1984)

\bibitem[Ha92]{Ha92}
Harris, J., \emph{Algebraic Geometry: A First Course}, Graduate Texts in Math. \textbf{133}, Springer-Verlag (1992)

\bibitem[IM00]{IM00}
Iliev, A., Markushevich, D., \emph{The Abel-Jacobi map for a cubic threefold and periods of Fano threefolds of degree 14}, Doc. Math. \textbf{5} (2000), 23--47

\bibitem[Ke77]{Ke77}
Kempf, G.R., \emph{Some quotient varieties have rational singularities}, Michigan Math. J. \textbf{24} (3), (1977), 347--352 

\bibitem[KiSchrey20]{KiSchrey20}
Kim, Y., Schreyer, F.-O., 
\emph{An explicit matrix factorization of cubic hypersurfaces of small dimension},
Journal of Pure and Applied Algebra,
Volume \textbf{224}, Issue 8, (2020)

\bibitem[Lu73]{Lu73}
Luna, D., \emph{Slices \'{e}tales}, Bull. Soc. Math. France, M\'{e}moire \textbf{33} (1973), 81--105

\bibitem[MaMe05]{MaMe05}
Manivel, L., Mezzetti, E., \emph{On linear spaces of skew-symmetric matrices of constant rank}, Manuscr. Math. \textbf{117} (2005), 319--331

\bibitem[Mum94]{Mum94}
Mumford, D., Fogarty, J., Kirwan, F., \emph{Geometric Invariant Theory}, third enlarged edition, Springer-Verlag (1994)

\bibitem[New09]{New09}
Newstead, P., \emph{Geometric Invariant Theory}, in: L. Brambila-Paz, S. Bradlow, O. García-Prada, \& S. Ramanan (Eds.), ``Moduli Spaces and Vector Bundles" (London Mathematical Society Lecture Note Series, pp. 99--127), Cambridge University Press (2009)

\bibitem[NO19]{NO19}
Nicaise, J., Ottem, J.C. \emph{Tropical degenerations and stable rationality}, preprint \href{https://arxiv.org/abs/1911.06138}{arXiv:1911.06138v3 [math.AG]}

\bibitem[PoVi94]{PoVi94}
Popov, V.L., Vinberg, E.B., \emph{Invariant Theory}, in ``Algebraic Geometry IV", Encyclopaedia of Mathematical Sciences \textbf{55}, Springer-Verlag (1994)

\bibitem[PS21]{PS21}
Pavic, N., Schreieder, S., \emph{The diagonal of quartic fivefolds}, \href{https://arxiv.org/abs/2106.04539}{arXiv:2106.04539 [math.AG]}

\bibitem[Sha69]{Sha69}
Shamash, J.J., \emph{The Poincare series of a local ring. I}, J. Algebra \textbf{12} (1969), 453--470

\bibitem[Te17]{Te17}
Testa, D., \emph{$3\times 3$ singular matrices of linear forms}, (2017), preprint, arXiv:1701.06864

\bibitem[Voi13]{Voi13}
Voisin, C., \emph{Abel-Jacobi map, integral Hodge classes and decomposition of the diagonal}, J. Algebraic Geom. \textbf{22} (2013), 141--174

\bibitem[Voi15]{Voi15}
Voisin, C., \emph{Unirational threefolds with no universal codimension 2 cycle},  Invent.
Math. \textbf{201} (2015), 207--237

\bibitem[Voi17]{Voi17}
Voisin, C. \emph{On the universal ${\rm CH_0}$ group of cubic hypersurfaces},
J. Eur. Math. Soc. \textbf{19} (2017), 1619--1653
\end{thebibliography}
\end{document}